\newtheorem{thm}{Theorem}
\newtheorem*{thm*}{Theorem}
\newtheorem{cor}[thm]{Corollary}
\newtheorem{lem}[thm]{Lemma}
\newtheorem{prop}[thm]{Proposition}
\newtheorem{rem}[thm]{Remark}
\theoremstyle{definition}
\numberwithin{thm}{section}
\numberwithin{table}{section}
\newcommand{\Z}{\mathbb{Z}}
\newcommand{\N}{\mathbb{N}}
\newcommand{\weyl}{\mathcal{W}}
\newcommand{\schub}{\mathcal{S}}
\newcommand{\g}{\mathfrak{g}}
\newcommand{\code}{\mathsf{code}}
\newcommand{\raiz}{\delta}
\newcommand{\CK}[2]{\mathop{\langle #1^{\vee}, #2 \rangle}}
\newcommand{\CKalpha}[2]{\CK{\raiz_{#1}}{\raiz_{#2}}}
\newcommand{\CKdual}[2]{\mathop{\langle #1^{\vee\vee}, #2^{\vee} \rangle}}
\newcommand{\CKalphadual}[2]{\CKdual{\raiz_{#1}}{\raiz_{#2}}}
\newcommand{\height}{\mathrm{ht}}
\newcommand{\w}[1]{\llbracket #1\rrbracket}
\newcommand{\I}{I}
\renewcommand{\hat}{\widehat}
\title[A correspondence between coefficients of flag manifolds and height of roots]{A correspondence between boundary coefficients of real flag manifolds and height of roots}
\author[lambert]{Jordan Lambert}
\address[Jordan Lambert]{Department of Mathematics -- ICEx, Universidade Federal Fluminense, Volta Redonda 27213-145, Rio de Janeiro, Brazil}\email{jordanlambert@id.uff.br}
\author[rabelo]{Lonardo Rabelo}
\address[Lonardo Rabelo]{Department of Mathematics, Federal University of Juiz de Fora, Juiz de Fora 36036-900, Minas Gerais, Brazil}
\email{lonardo@ice.ufjf.br}
\thanks{This work was supported by the Coordination for the Improvement of Higher Level Personnel -- Capes}
\keywords{Real flag manifolds, Symmetric Group, Schubert cell, Homology}
\subjclass[2010]{Primary: 05A05, 14M15, 57T15}
\begin{document}

\begin{abstract}
In this paper we prove a new formula for the coefficients of the cellular homology of real flag manifolds in terms of the height of certain roots. In particular, for flag manifolds of type A, we get a very simple formula for these coefficients and an explicit expression for the first and second homology groups with integer coefficients.
\end{abstract}

\maketitle

\allowdisplaybreaks %% Quebra equações matematicas no Align

\section{Introduction}

In this work we revisit the determination of the coefficient of the boundary map for the cellular homology of real flag manifolds, a problem which is equivalent to that of finding the incidence coefficients of the differential map for the cohomology. It follows a series of previous work on this subject like Kocherlakota \cite{Koc95} with its Morse Theory approach, Casian-Stanton \cite{CS99} by a representation theoretical view, Rabelo-San Martin \cite{RSm19} within the frame of the  cellular homology and, lastly, Matszangosz \cite{Mat19} throughout the cohomology of the Vassiliev complex. We prove a new formula for these coefficients in terms of the height of certain roots.

A generalized flag manifold $\mathbb{F}$ is a homogeneous space $G/P$, where $G$ is a real noncompact semisimple Lie Group and $P$ is a parabolic subgroup. It admits a cellular decomposition called Bruhat decomposition, where the cells are the Schubert cells and parametrized by the Weyl group $\mathcal{W}$. Consider a pair of Schubert cells $\schub_w,\schub_{w'}$ such that $w$ covers $w'$, i.e., $w'\leq w$ by the Bruhat-Chevalley order and $\ell(w')=\ell(w)-1$. In this case, there is a root $\beta$ such that $w=s_{\beta}\cdot w'$. According to both \cite{Koc95} and \cite{RSm19}, we may summarize how to compute the coefficient $c(w,w')$ as it follows: Consider the set $\Pi_w$ of positive roots sent to negative by $w^{-1}$ and denote by $\phi(w)$ the sum of all such roots. Hence, $c(w,w')=\pm(1+(-1)^{\kappa(w,w')})$, where $\kappa$ is obtained from the equation $\phi(w)-\phi(w')=\kappa(w,w')\cdot \beta$. The papers \cite{Rab16} and \cite{LR19} apply this procedure in the context of the Isotropic Grassmannians. The results obtained in these works (for instance, see \cite{LR19a}, Theorem 3.12) suggest a formula for the coefficients in terms of the height of some root. This is our first Theorem, which is remarkable by its simplicity. 

\begin{thm*} If we write $w=w'\cdot s_{\gamma}$, it follows that $\kappa(w,w')=\mathrm{ht}(\gamma^{\vee})$, where $\mathrm{ht}(\gamma^{\vee})$ is the height of the corresponding dual coroot $\gamma^{\vee}$ with respect to the dual system of roots.
\end{thm*}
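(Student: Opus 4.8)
The plan is to trade the inversion-set description of $\phi$ for the closed form $\phi(u)=\rho-u\rho$, where $\rho$ is half the sum of the positive roots, and then to recognize the resulting pairing as a coroot height. First I would fix the root data of the covering pair. Writing $w=w'\cdot s_{\gamma}$ with $\gamma$ a positive root, the fact that right multiplication by $s_{\gamma}$ raises the length exactly when $w'\gamma>0$ shows that the hypothesis $\ell(w)=\ell(w')+1$ forces $w'\gamma$ to be a positive root; and from $s_{\beta}=w(w')^{-1}=w'\,s_{\gamma}\,(w')^{-1}=s_{w'\gamma}$ we obtain $\beta=w'\gamma$, taking both positive. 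So $\beta$ and $\gamma$ lie in one $\weyl$-orbit, conjugate by $w'$.

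Next I would invoke, or establish by a one-line induction on $\ell(u)$ using $\Pi_{s_{i}u}=\{\alpha_{i}\}\sqcup s_{i}\Pi_{u}$ whenever $\ell(s_{i}u)=\ell(u)+1$, the classical identity $\phi(u)=\sum_{\alpha\in\Pi_{u}}\alpha=\rho-u\rho$ (whose base case is $\rho-s_{i}\rho=\langle\rho,\alpha_{i}^{\vee}\rangle\alpha_{i}=\alpha_{i}$). Granting this, the two copies of $\rho$ cancel and, using $s_{\gamma}\rho=\rho-\langle\rho,\gamma^{\vee}\rangle\gamma$ together with $\beta=w'\gamma$,
\[
\phi(w)-\phi(w')=w'\rho-w\rho=w'(\rho-s_{\gamma}\rho)=\langle\rho,\gamma^{\vee}\rangle\,w'\gamma=\langle\rho,\gamma^{\vee}\rangle\,\beta .
\]
In particular $\phi(w)-\phi(w')$ is manifestly an integer multiple of $\beta$, and comparison with $\phi(w)-\phi(w')=\kappa(w,w')\cdot\beta$ gives $\kappa(w,w')=\langle\rho,\gamma^{\vee}\rangle$.

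It remains to identify $\langle\rho,\gamma^{\vee}\rangle$ with $\mathrm{ht}(\gamma^{\vee})$. Expanding $\gamma^{\vee}=\sum_{i}c_{i}\,\alpha_{i}^{\vee}$ in the basis of simple coroots --- which is precisely the basis of simple roots of the dual root system --- and using $\langle\rho,\alpha_{i}^{\vee}\rangle=1$ for every $i$, one gets $\langle\rho,\gamma^{\vee}\rangle=\sum_{i}c_{i}=\mathrm{ht}(\gamma^{\vee})$, which is the assertion. The step I expect to demand the most care is exactly this passage through $\rho$: one is tempted to claim $\Pi_{w}=\Pi_{w'}\sqcup\{\beta\}$ and conclude $\kappa=1$, but that is false as soon as $\gamma$ is non-simple, since $s_{\gamma}$ then fails to preserve positivity of roots other than $\pm\gamma$; the coefficient $\langle\rho,\gamma^{\vee}\rangle$ really does exceed $1$ in general, and only the half-sum identity exhibits it correctly. (In the non-reduced restricted root systems one should additionally verify the normalization $\langle\rho,\alpha_{i}^{\vee}\rangle=1$ for the chosen $\rho$, or else phrase the conclusion directly via $\langle\rho,\gamma^{\vee}\rangle$.)
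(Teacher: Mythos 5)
Your argument is correct, and it takes a genuinely different route from the paper's. You reduce everything to two classical facts: the identity $\phi(u)=\sum_{\alpha\in\Pi_u}\alpha=\rho-u\rho$ (valid here because the split hypothesis makes all multiplicities $\dim\mathfrak{g}_\alpha=1$) and the normalization $\langle\rho,\alpha_i^{\vee}\rangle=1$, which together give $\phi(w)-\phi(w')=\langle\rho,\gamma^{\vee}\rangle\,w'\gamma$ and hence $\kappa(w,w')=\langle\rho,\gamma^{\vee}\rangle=\mathrm{ht}(\gamma^{\vee})$; your identification $\beta=w'\gamma>0$ from $\ell(w)=\ell(w')+1$ is the standard length criterion and is fine, as is the passage from simple coroots to the simple system of $\Pi^{*}$ (which needs the reduced/split hypothesis of Theorem \ref{thm:dualheight}, a caveat you flag correctly). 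The paper instead proceeds computationally: it develops the recursion for $s_1\cdots s_{m-1}(\delta_m)$ via the polynomials $P^{l}_{x,y}$ (Proposition \ref{prop:raizes}, Lemma \ref{lem:recP}), derives the pairing formula of Lemma \ref{lem:bracket}, obtains $\kappa$ as the alternating sum of Corollary \ref{coro:kappa}, and then expands $\gamma^{\vee}$ in the dual system via Corollary \ref{cor:dualroot} to match coefficients term by term. Your proof is considerably shorter and more conceptual; what the paper's heavier machinery buys is an explicit coefficient formula (Corollary \ref{coro:kappa}) stated with root multiplicities, i.e.\ valid beyond the split case, and intermediate tools that are reused elsewhere (e.g.\ Proposition \ref{prop:raizes} in the proof of Proposition \ref{prop:typeAcoef}). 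Your closing warning about the tempting but false claim $\Pi_w=\Pi_{w'}\sqcup\{\beta\}$ is well taken and shows you located the only real pitfall of the short argument.
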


In other words, the parity of $\mathrm{ht}(\gamma^{\vee})$ determines whether the coefficient $c(w,w')$ is zero or $\pm 2$. Notice the role developed by the roots $\beta$ and $\gamma$ or, equivalently, by the right and left actions in the understanding of this topic.

In the context of flags of type $A_{n-1}$, this theorem simplifies a lot the task of computing the coefficient. Schubert cells are parametrized by the symmetric group $S_n$. If we denote a permutation by the one-line notation as $w=w_1\cdots w_n$ then $w$ covers $w'$ if and only if there is $i<j$ such that $w=w'\cdot (i,j)$, $w_i'<w_j'$ and there is no $i<k<j$ such that $w'_i<w_k<w'_j$. Since the simple system of roots is equal to its dual,
it is immediate to conclude that $\kappa(w,w')=j-i$ (see Proposition \ref{prop:typeAcoef}).

It provides a very nice link between combinatorics and topology. As a consequence, we retrieve the orientability condition for this class of flags according to Patr\~ao-San Martin-Santos-Seco \cite{PSS12} (Proposition \ref{prop:orient}).

We may also go further and derive an explicit formula for 1, 2-homology of any partial flag manifold of type A.
\begin{thm*}
Given any partial flag manifold $\mathbb{F}_{\Theta}$ of type $A$:
\begin{enumerate}
\item For $n\geqslant 3$, the 1-homology is given by
\begin{equation*}
H_{1}(\mathbb{F}_{\Theta},\Z) \cong (\Z_{2})^{n-|\Theta|-1}.
\end{equation*}

\item For $n\geqslant 4$, the 2-homology is given by
\begin{equation*}
H_{2}(\mathbb{F}_{\Theta},\Z) \cong (\Z_{2})^{{n-|\Theta|-1 \choose 2} +r_{\Theta} - 1},
\end{equation*}
where $r_{\Theta}$ is the number of connected components of the Dynkin diagram of $\Theta$.
\end{enumerate}
\end{thm*}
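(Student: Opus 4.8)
The plan is to work inside the cellular chain complex $(C_\bullet,\partial_\bullet)$ of $\mathbb{F}_\Theta$, where $C_k$ is the free abelian group on the Schubert cells $\schub_w$ of dimension $k$ (so $\ell(w)=k$ and $w$ is the minimal representative of its coset $wW_\Theta$) and, since we are in type $A$, the boundary coefficient attached to a cover $w=w'\cdot(i,j)$ is $c(w,w')=\pm\bigl(1+(-1)^{\,j-i}\bigr)$ by Proposition~\ref{prop:typeAcoef}. Put $m=n-|\Theta|-1$. The whole argument rests on the elementary observation that every such coefficient is $0$ or $\pm2$.

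\emph{First homology.} The $1$-cells are the $\schub_{s_i}$ with $\alpha_i\notin\Theta$, so $C_1\cong\Z^{m}$; since $s_i=e\cdot(i,i+1)$ has $j-i=1$ we get $\partial_1=0$, whence $H_1=C_1/\operatorname{im}\partial_2$. The $2$-cells fall into two families: the \emph{adjacent} cells $s_as_b$ with $|a-b|=1$ (those of this form with $\alpha_b\notin\Theta$) and the \emph{commuting} cells $s_as_b$ with $|a-b|\ge2$ (those with $\alpha_a,\alpha_b\notin\Theta$). Substituting the two elements covered by $s_as_b$, namely $s_a$ and $s_b$, into $c(w,w')=\pm(1+(-1)^{j-i})$ gives $\partial_2(\schub_{s_as_b})=\pm2\,\schub_{s_b}$ in the adjacent case and $\partial_2(\schub_{s_as_b})=0$ in the commuting case. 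As $n\ge3$, every node of $A_{n-1}$ has a neighbour, so $2\,\schub_{s_i}\in\operatorname{im}\partial_2$ for each $\alpha_i\notin\Theta$; hence $\operatorname{im}\partial_2=2C_1$ and $H_1\cong(\Z_2)^{m}$.

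\emph{Second homology}, $H_2=\ker\partial_2/\operatorname{im}\partial_3$. The shape of $\partial_2$ yields $\ker\partial_2=K\oplus B$, with $B$ free on the commuting cells and $K$ free on the relations $\schub_{s_{i-1}s_i}-\schub_{s_{i+1}s_i}$ (up to sign), one for each \emph{interior} white node, i.e.\ each $i$ with $2\le i\le n-2$ and $\alpha_i\notin\Theta$. A cell count on the path $A_{n-1}$—using that the runs of $\Theta$ and of $\Theta^{c}$ alternate along the diagram, so the adjacent cells number $2m$ up to a correction at the two ends while the commuting cells number $\binom{m}{2}$ minus the number of edges inside $\Theta^{c}$—gives, via a short alternation identity, $\operatorname{rank}\ker\partial_2=\binom{m}{2}+r_\Theta-1$. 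Now $\operatorname{im}\partial_3\subseteq 2\ker\partial_2$ automatically: every entry of $\partial_3$ is even, so $\operatorname{im}\partial_3\subseteq 2C_2\cap\ker\partial_2$, and $2C_2\cap\ker\partial_2=2\ker\partial_2$ because $C_1$ is torsion-free. Therefore $H_2$ surjects onto $\ker\partial_2/2\ker\partial_2\cong(\Z_2)^{\binom{m}{2}+r_\Theta-1}$, and I must show this surjection is an isomorphism. For that I would produce $3$-cells whose boundaries generate $2\ker\partial_2$: for an interior white node $\alpha_i$ the $3$-cell $s_{i-1}s_{i+1}s_i$ has $\partial_3=\pm2\bigl(\schub_{s_{i-1}s_i}-\schub_{s_{i+1}s_i}\bigr)$, so such cells generate $2K$; for a commuting pair $\{a,b\}\subseteq\Theta^{c}$ the $3$-cell $s_as_{b-1}s_b$ has $\partial_3=\pm2\,\schub_{s_as_b}$, so these generate $2B$. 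In each case one checks, again through $c(w,w')=\pm(1+(-1)^{j-i})$, that the remaining Bruhat covers of the $3$-cell go through transpositions $(i,j)$ with $j-i$ odd and so contribute nothing. Then $\operatorname{im}\partial_3=2\ker\partial_2$, and $H_2\cong(\Z_2)^{\binom{n-|\Theta|-1}{2}+r_\Theta-1}$.

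The homological algebra here is light; the main obstacle is the combinatorics underneath the final paragraph—describing, for a general $\Theta$, the minimal coset representatives of lengths $1$, $2$ and $3$, confirming the displayed forms of $\partial_2$ and especially $\partial_3$ from the covering relations and the parities of $j-i$ (with the subcases interior-versus-boundary node, and commuting pair at distance exactly $2$ versus larger), and performing the cell counts, which is where the component number $r_\Theta$ and the constant $-1$ enter.
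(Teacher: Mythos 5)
Your proposal is correct and, in its essentials, it coincides with the paper's own proof: the boundary values you invoke are those of Lemma \ref{lem:boundarylow} (coming from Proposition \ref{prop:typeAcoef}), your generators of $\ker\partial_2$ are the paper's $X_{i,i+1}=\schub_{\w{i,i+1}}-\schub_{\w{i+1,i+1}}$ and $X_{i,j}=\schub_{\w{i,j}}$, and your two families of $3$-cells $s_as_{b-1}s_b$ and $s_{i-1}s_{i+1}s_i$ are exactly the paper's $\w{i,j-1,j}$ and $\w{i,i+1,i+1}$, re-indexed. Two differences are worth recording. First, your rank count is direct: with $m=n-|\Theta|-1$, $e$ the number of Dynkin edges joining two roots of $\Sigma\setminus\Theta$ and $t$ the number of extreme nodes lying in $\Sigma\setminus\Theta$, you have $\operatorname{rank}\ker\partial_2=\bigl(\binom{m}{2}-e\bigr)+(m-t)$, and the alternation of $\Theta$-runs and $(\Sigma\setminus\Theta)$-runs along the path gives $(m-e)-t=r_\Theta-1$; the paper obtains the same number $N_\Theta$ by an induction on $|\Theta|$, so your count is genuinely shorter, provided you write the alternation identity out (including the extreme cases $\Theta=\emptyset$ and $\Theta=\Sigma$). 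Second, you carry all coefficients only up to sign; the parity of $j-i$ does decide which covers contribute, but it does not decide the relative sign of the two surviving terms of $\partial_3\schub_{\w{i,i+1,i+1}}$, which you need in order to conclude that this boundary is $\pm2$ times your kernel generator rather than $\pm2\bigl(\schub_{\w{i,i+1}}+\schub_{\w{i+1,i+1}}\bigr)$. The paper settles this by fixing reduced words so that the degree in Theorem \ref{thm:rabelosanmartin} is $1$ and the sign is $(-1)^{\I}$; in your setup you can bypass signs entirely by noting that $\tfrac12\partial_3\schub_{\w{i,i+1,i+1}}$ has entries $\pm1$ supported on the two adjacent $2$-cells and lies in $\ker\partial_2$ (because $\partial_2\partial_3=0$ and $C_1$ is torsion free), hence equals $\pm$ the kernel generator whatever the signs occurring in $\partial_2$ are. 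Finally, do confirm that the $3$-cells you use have right descent sets disjoint from $\Theta$, so they belong to $\weyl^{\Theta}$ and survive in the $\Theta$-complex; this is where the paper's case-by-case verification enters.
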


With respect to the 1-homology group, as the abelianization of the fundamental group, it may be also derived from the work of Wiggerman \cite{Wig98} which gives a presentation of $\Pi_1(\mathbb{F}_{\Theta})$ with generators in the set $\Sigma \setminus \Theta$ subject to some relations. In this sense, although it is not a kind of new result, our combinatorial approach presents a direct and simple computation for such abelianization. A similar result that should be mentioned is that in the context of the 2-homotopy of the complex flag manifolds, Grama-Seco (\cite{GS20}, Theorem 2.4) obtained by geometric methods that it is generated by $2$-spheres which are also enumerated by the complement of $\Theta$ inside $\Sigma$. With respect to the 2-homology group, Del Barco-San Martin (\cite{BS19}, Theorem 4.1) has shown that it is a torsion group $\mathbb{Z}_{2}^{N}$ without providing any description for $N$.

The article is organized as follows.
In the Section \ref{sec:preliminaries}, we introduce the main definitions about flag manifolds, root and coroot systems, Bruhat decomposition, and combinatorics of the symmetric group. 
In the next Section \ref{sec:height}, we prove the height formula for the coefficients and derive some consequences. 
Finally, in the Section  \ref{sec:conclusion} we point some further directions out. 

\section{Preliminaries}\label{sec:preliminaries}

Let $\N=\{1,2,3, \dots\}$ and $\Z$ be the set of integers. For $n,m\in\Z$, where $n\leqslant m$, denote the set $[n,m]=\{n,n+1, \dots, m\}$. For $n\in \N$, denote $[n]=[1,n]$.

\subsection{Flag Manifolds}

We will begin by defining all required structure to deal with flag manifolds. 

We define the flag manifolds as homogeneous spaces $G/P$ where $G$ is a non-compact semi-simple Lie group and $P$ is a parabolic subgroup of $G$. The flag manifolds for the several groups $G$ with non-compact real semi-simple Lie algebra $\mathfrak{g}$ are the same. 

If $\mathfrak{g} = \mathfrak{k} \oplus \mathfrak{s}$ is a Cartan decomposition, let $\mathfrak{a}$ be a maximal abelian sub-algebra contained in $\mathfrak{s}$. A sub-algebra $\mathfrak{h} \subset \mathfrak{g}$ is said to be a Cartan sub-algebra if $\mathfrak{h}_{\mathbb{C}}$ is a Cartan sub-algebra of $\mathfrak{g}_{\mathbb{C}}$. If $\mathfrak{h} = \mathfrak{a}$ is a Cartan sub-algebra of $\mathfrak{g}$, we say that $\mathfrak{g}$ is a split real form of $\mathfrak{g}_{\mathbb{C}}$. 

Let $\Pi$ be the set of roots of the pair $(\mathfrak{g}, \mathfrak{a})$ and 
fix a simple system of roots $\Sigma \subset \Pi$. Denote by $\Pi^{\pm}$ respectively the set of positive and negative
roots and by $\mathfrak{a}^+$ the Weyl chamber $\mathfrak{a}^+ = \{ H \in \mathfrak{a} \colon \alpha (H) > 0 \mbox{ for all }
\alpha \in \Sigma \}$. The direct sum of root spaces corresponding to the positive roots is denoted by $\mathfrak{n} = \sum_{\alpha \in \Pi^+} \mathfrak{g}_\alpha$. The Iwasawa decomposition of $\mathfrak{g}$ is given by $\mathfrak{g} = \mathfrak{k} \oplus \mathfrak{a} \oplus \mathfrak{n}$. The notations $K$ and $N$ refer respectively to the connected subgroups whose Lie algebras are $\mathfrak{k}$ and $\mathfrak{n}$.

A minimal parabolic sub-algebra of $\mathfrak{g}$ is given by $\mathfrak{p} = \mathfrak{m} \oplus \mathfrak{a} \oplus \mathfrak{n}$ where $\mathfrak{m}$ is the centralizer of $\mathfrak{a}$ in $\mathfrak{k}$. Let $P$ be the
minimal parabolic subgroup with Lie algebra $\mathfrak{p}$. Note that $P$ is the normalizer of $\mathfrak{p}$ in $G$.
We call $\mathbb{F} = G/P$ the maximal flag manifold of $G$ and denote by $b_0$ the base point $1 \cdot P$ in $G/P$.

Now, assume that $\Theta\subset \Sigma$ is any subset of simple roots. Such a choice provides a very interesting way to obtain several flag manifolds, called partial flag manifolds as we now explain. Denote by $\mathfrak{g}(\Theta)$ the semi-simple Lie algebra generated by $\mathfrak{g}_{\pm \alpha}$, $\alpha \in \Theta$. Let $G(\Theta)$ be the connected group with Lie algebra $\mathfrak{g}(\Theta)$. Yet, let $\mathfrak{n}_\Theta$ be the sub-algebra generated by the roots spaces $\mathfrak{g}_{-\alpha}$, $\alpha \in \Theta$ and consider $\mathfrak{p}_\Theta = \mathfrak{n}_\Theta \oplus \mathfrak{p}$. The normalizer $P_\Theta$ of $\mathfrak{p}_\Theta$ in $G$ is a standard parabolic subgroup which contains $P$. Finally, the corresponding flag manifold $\mathbb{F}_\Theta = G/P_{\Theta}$ is called a partial flag manifold of $G$ of type $\Theta$. We denote by $b_\Theta$ the base point $1\cdot P_\Theta$ in $G/P_\Theta$.

\subsection{Root Systems and Coroots}
In this section, we highlight some results about the dual system of a root system which will play a key role in the sequel. We follow closely the book of Perrin \cite{Per15}.  

Let $E$ be a finite dimensional vector space. A set $\Pi \subset E$ is an abstract system of roots if it is finite, spans $E$ and does not contain $0$ and satisfies
\begin{enumerate}
\item for every $\alpha \in \Pi$, there exists a reflection $s_{\alpha}$ with respect to $\alpha$ such that $s_{\alpha}(\Pi)=\Pi$;
\item for every $\alpha,\beta \in \Pi$, $s_{\alpha}(\beta)-\beta$ is an integer multiple of $\alpha$. 
\end{enumerate}

The Weyl group $\weyl$ of the root system $\Pi$ is the group generated by reflections $s_{\alpha}$, $\alpha \in \Pi$. It is possible to show that there exists an inner product $\langle \cdot, \cdot \rangle$ in $E$ which is invariant by $\weyl$. It follows that $s_{\alpha}$ is the corresponding orthogonal reflection 
\begin{equation*}
s_{\alpha}(\beta) = \beta - 2\frac{\langle \beta, \alpha\rangle}{\langle \alpha, \alpha \rangle} \alpha.
\end{equation*}

From the identification of $E$ with $E^{\ast}$ by $\langle \cdot, \cdot \rangle$, for each root $\alpha \in \Pi$, let us denote by $\alpha^{\vee}=\dfrac{2\alpha}{\langle \alpha,\alpha\rangle}$. We call $\alpha^{\vee}$ the coroot of $\alpha$. It follows that if $\beta\in \Pi$ is another root, 
\begin{equation}\label{eq:acao_raizes}
s_{\alpha}(\beta) = \beta - \CK{\alpha}{\beta}\alpha.
\end{equation}

We know that if we choose $\langle \cdot, \cdot \rangle$ to be the Killing form, then the set of roots $\Pi$ associated to the Lie algebra $\mathfrak{g}$ is an abstract root system. 
%A root system is called reduced when for $\alpha\in \Pi$, $\mathbb{R}\alpha \cap \Pi =\{-\alpha,\alpha\}$. Denote by $\Pi^{\ast}$ the set of coroots of $\Pi$. 
Furthermore, the set of coroots $\Pi^{\ast}$ is also a root system in $E^{\ast}$ which is called the dual root system.

\begin{prop}[\cite{Per15}, Proposition 11.4.1]
Let $\alpha, \beta \in \Pi$. Then, $(s_{\alpha}\beta)^{\vee} = s_{\alpha^{\vee}}(\beta^{\vee})$.
\end{prop}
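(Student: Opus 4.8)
The plan is to exploit two elementary facts: that $s_{\alpha}$ is an orthogonal transformation of $E$ (because $\langle\cdot,\cdot\rangle$ is $\weyl$-invariant), and that the reflection attached to a nonzero vector depends only on the line that vector spans.

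First I would record the norm-preservation: since $s_{\alpha}$ is an isometry, $\langle s_{\alpha}\beta, s_{\alpha}\beta\rangle = \langle \beta,\beta\rangle$. Applying the definition $\mu^{\vee} = 2\mu/\langle\mu,\mu\rangle$ to $\mu = s_{\alpha}\beta$ and using that $s_{\alpha}$ is linear, this gives
\[
(s_{\alpha}\beta)^{\vee} \;=\; \frac{2\,s_{\alpha}\beta}{\langle s_{\alpha}\beta, s_{\alpha}\beta\rangle} \;=\; \frac{2\,s_{\alpha}\beta}{\langle \beta,\beta\rangle} \;=\; s_{\alpha}\!\left(\frac{2\beta}{\langle \beta,\beta\rangle}\right) \;=\; s_{\alpha}(\beta^{\vee}).
\]

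Second, under the identification of $E$ with $E^{\ast}$ furnished by $\langle\cdot,\cdot\rangle$, the coroot $\alpha^{\vee} = \tfrac{2}{\langle\alpha,\alpha\rangle}\,\alpha$ is a positive scalar multiple of $\alpha$. Since the orthogonal reflection $x \mapsto x - 2\frac{\langle x, v\rangle}{\langle v, v\rangle}\,v$ is unchanged if $v$ is replaced by any nonzero scalar multiple of itself, the reflection $s_{\alpha^{\vee}}$ — viewed on $E^{\ast}\cong E$ — coincides with $s_{\alpha}$. Combining the two steps yields $(s_{\alpha}\beta)^{\vee} = s_{\alpha}(\beta^{\vee}) = s_{\alpha^{\vee}}(\beta^{\vee})$, which is the assertion.

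The computation itself is immediate once it is set up this way; the only point demanding care is the bookkeeping of the identification $E \cong E^{\ast}$, namely verifying that $s_{\alpha^{\vee}}$, a priori a transformation of the dual space, is literally the same linear map as $s_{\alpha}$. That is where I expect a careful reader to want a sentence of justification, but it presents no real obstacle.
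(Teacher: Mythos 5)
Your argument is correct: since the inner product is $\weyl$-invariant, $s_{\alpha}$ is an isometry, giving $(s_{\alpha}\beta)^{\vee}=s_{\alpha}(\beta^{\vee})$, and because $\alpha^{\vee}$ is a positive multiple of $\alpha$ under the identification $E\cong E^{\ast}$, the reflection $s_{\alpha^{\vee}}$ coincides with $s_{\alpha}$, which closes the argument. The paper offers no proof of its own here (it simply cites \cite{Per15}, Proposition 11.4.1), and your two-step reasoning is exactly the standard textbook proof of that cited result, so there is nothing substantive to compare.
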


Hence, we derive the following corollary that will be very useful.

\begin{cor}\label{cor:dualroot}
Suppose that $\alpha\in \Pi$ is given by $\alpha = s_{1}\cdots s_{m-1}(\raiz_{m})$ such that $s_{i} = s_{\raiz_{i}}$ is the simple reflection associated to $\raiz_{i}\in \Pi$. Then, the coroot $\alpha^{\vee}$ can be written as follows
\begin{equation}
\alpha^{\vee} = s_{\raiz_{1}^\vee}\cdots s_{\raiz_{m-1}^\vee}(\raiz_{m}^{\vee}).
\end{equation}
\end{cor}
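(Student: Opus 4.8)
The plan is to induct on $m$, using the cited Proposition \cite{Per15}[11.4.1] as the engine. The base case $m=1$ is vacuous: $\alpha = \raiz_1$ and $\alpha^\vee = \raiz_1^\vee$, with no reflections on either side. For the inductive step, suppose the result holds for expressions of length $m-1$. Given $\alpha = s_1 \cdots s_{m-1}(\raiz_m)$, the natural move is to peel off the first reflection: write $\alpha = s_1(\beta)$ where $\beta = s_2 \cdots s_{m-1}(\raiz_m)$. By the Proposition, $\alpha^\vee = (s_1 \beta)^\vee = s_{\raiz_1^\vee}(\beta^\vee)$. Then I would apply the inductive hypothesis to $\beta$, which is expressed as a product of $m-2$ simple reflections applied to $\raiz_m$, to conclude $\beta^\vee = s_{\raiz_2^\vee} \cdots s_{\raiz_{m-1}^\vee}(\raiz_m^\vee)$. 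Substituting gives $\alpha^\vee = s_{\raiz_1^\vee} s_{\raiz_2^\vee} \cdots s_{\raiz_{m-1}^\vee}(\raiz_m^\vee)$, as desired.

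One subtlety I would want to address is that the Proposition as stated is about $(s_\alpha \beta)^\vee$ for arbitrary roots $\alpha, \beta$, and in particular $s_{\raiz_i}$ is a reflection in the root $\raiz_i$, so $s_{\raiz_i^\vee}$ is the corresponding reflection in the dual system. I should note that the simple reflections $s_i = s_{\raiz_i}$ acting on $E$ correspond under the identification $E \cong E^*$ (via the invariant inner product) to the reflections $s_{\raiz_i^\vee}$ acting on $E^*$; equivalently, the formula $(s_\alpha\beta)^\vee = s_{\alpha^\vee}(\beta^\vee)$ is exactly what licenses replacing each $s_{\raiz_i}$ by $s_{\raiz_i^\vee}$ as we pass to coroots. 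Since this is precisely the content of the quoted Proposition applied one reflection at a time, the induction is clean.

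I do not anticipate a serious obstacle here — the statement is essentially a formal iteration of Proposition 11.4.1, and the only care needed is bookkeeping the indices so that the inductive hypothesis applies to the correctly-shaped subexpression $\beta = s_2\cdots s_{m-1}(\raiz_m)$ (length $m-2$ in simple reflections, ending at $\raiz_m$), which matches the hypothesis format after relabeling. If anything, the mildly annoying point is just being explicit that $\beta$ is a genuine root (so that $\beta^\vee$ and the Proposition make sense), which follows since the Weyl group permutes $\Pi$ and $\raiz_m \in \Pi$.
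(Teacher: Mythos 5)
Your proof is correct and follows the same route as the paper, which derives the corollary directly by iterating Proposition 11.4.1 (the paper states it as an immediate consequence without writing out the induction). Your explicit induction, peeling off $s_1$ and applying $(s_{\alpha}\beta)^{\vee}=s_{\alpha^{\vee}}(\beta^{\vee})$, is exactly the intended argument, and your remark that $\beta=s_2\cdots s_{m-1}(\raiz_m)$ is a root because the Weyl group permutes $\Pi$ covers the only point needing care.
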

 
A system of simple roots $\Sigma \subset \Pi$ is a basis of $E$ such that every root $\alpha \in \Pi$ is written as a linear combination with integer coefficients with the same signal, i.e., all of them either non-negative or non-positive. 
A root system is called reduced when for $\alpha\in \Pi$, $\mathbb{R}\alpha \cap \Pi =\{-\alpha,\alpha\}$. Such a reduced root system is always the set of roots of a Cartan subalgebra over an algebraically closed field. Non-reduced root systems appear in real semi-simple Lie algebras. However, split real forms correspond to reduced root systems.

\begin{prop} [\cite{Per15}, Prop. 11.6.13] If $\Pi$ is reduced then $\Sigma^{\ast}=\{\alpha^{\vee}\colon \alpha \in \Sigma\}$ is a simple root system of $\Pi^\ast$. 
\end{prop}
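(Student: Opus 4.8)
The plan is to verify directly the two conditions that define a simple system, applied to the candidate $\Sigma^{\ast}=\{\alpha^{\vee}\colon\alpha\in\Sigma\}$ inside the root system $\Pi^{\ast}$ (which we already know is a root system): (i) $\Sigma^{\ast}$ is a basis of $E$, under the identification $E\cong E^{\ast}$; and (ii) every coroot is an integral linear combination of $\Sigma^{\ast}$ whose coefficients are either all nonnegative or all nonpositive. Condition (i) is immediate, because $\alpha^{\vee}=\tfrac{2}{\langle\alpha,\alpha\rangle}\alpha$ is a positive scalar multiple of $\alpha$: a relation $\sum_{\alpha\in\Sigma}c_{\alpha}\alpha^{\vee}=0$ rewrites as $\sum_{\alpha}\tfrac{2c_{\alpha}}{\langle\alpha,\alpha\rangle}\alpha=0$, which forces all $c_{\alpha}=0$ since $\Sigma$ is a basis, and $|\Sigma^{\ast}|=|\Sigma|=\dim E$.

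For (ii), first reduce to positive coroots. Since $\Pi$ is reduced, $\alpha\mapsto\alpha^{\vee}$ is a bijection $\Pi\to\Pi^{\ast}$, and $(-\alpha)^{\vee}=-\alpha^{\vee}$, so $\Pi^{\ast}$ is the disjoint union of $(\Pi^{+})^{\vee}:=\{\beta^{\vee}\colon\beta\in\Pi^{+}\}$ and $-(\Pi^{+})^{\vee}$; hence it suffices to prove $\beta^{\vee}\in\Z_{\geqslant0}\,\Sigma^{\ast}$ for each $\beta\in\Pi^{+}$. Nonnegativity of the coefficients is free: writing $\beta=\sum_{\alpha\in\Sigma}n_{\alpha}\alpha$ with $n_{\alpha}\in\Z_{\geqslant0}$ and using $\beta^{\vee}=\tfrac{2}{\langle\beta,\beta\rangle}\beta$, we get $\beta^{\vee}=\sum_{\alpha}\tfrac{n_{\alpha}\langle\alpha,\alpha\rangle}{\langle\beta,\beta\rangle}\,\alpha^{\vee}$, and every coefficient here is $\geqslant0$. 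The whole point is that these coefficients are integers.

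I would prove integrality by induction on $\height(\beta)=\sum_{\alpha}n_{\alpha}$. If $\height(\beta)=1$ then $\beta\in\Sigma$ and $\beta^{\vee}\in\Sigma^{\ast}$. If $\height(\beta)\geqslant2$, then since $\langle\beta,\beta\rangle>0$ equals $\sum_{\alpha}n_{\alpha}\langle\beta,\alpha\rangle$ with all $n_{\alpha}\geqslant0$, there is $\alpha\in\Sigma$ with $\langle\beta,\alpha\rangle>0$, so $\CK{\alpha}{\beta}$ is a positive integer; then $\beta':=s_{\alpha}(\beta)=\beta-\CK{\alpha}{\beta}\alpha$ is again a positive root (as $\beta\ne\alpha$ and $s_{\alpha}$ permutes $\Pi^{+}\setminus\{\alpha\}$), of strictly smaller height, and $\beta=s_{\alpha}(\beta')$. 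Now $\beta^{\vee}=(s_{\alpha}\beta')^{\vee}=s_{\alpha^{\vee}}\big((\beta')^{\vee}\big)$ by \cite{Per15}, Prop.~11.4.1 (equivalently, Corollary~\ref{cor:dualroot}); by the inductive hypothesis $(\beta')^{\vee}\in\Z\,\Sigma^{\ast}$, and $s_{\alpha^{\vee}}$ maps $\Z\,\Sigma^{\ast}$ into itself because, by \eqref{eq:acao_raizes} applied to the root system $\Pi^{\ast}$, $s_{\alpha^{\vee}}(\gamma^{\vee})=\gamma^{\vee}-\CKdual{\alpha}{\gamma}\,\alpha^{\vee}$ with $\CKdual{\alpha}{\gamma}\in\Z$ for every $\gamma\in\Sigma$ (axiom (2) for $\Pi^{\ast}$). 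Hence $\beta^{\vee}\in\Z\,\Sigma^{\ast}$; combined with the nonnegativity above and the uniqueness of coordinates in the basis $\Sigma^{\ast}$, this gives $\beta^{\vee}\in\Z_{\geqslant0}\,\Sigma^{\ast}$ and closes the induction, proving (ii).

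The only step that is not purely formal is integrality, and it is genuinely needed: ``$\Sigma^{\ast}$ is a basis of $E$ consisting of coroots'' does not by itself force every coroot into $\Z\,\Sigma^{\ast}$ --- in type $B_{2}$, $\{e_{1}+e_{2},\,e_{1}-e_{2}\}$ is such a basis while $e_{1}$ is not an integral combination of it. What saves us is that $\Sigma^{\ast}$ is the image of the \emph{simple} system $\Sigma$, and the height induction above, fed by the compatibility of the coroot map with the Weyl group (Corollary~\ref{cor:dualroot}), is exactly the device that exploits this. (Alternatively, one can argue geometrically: $s_{\gamma}=s_{\gamma^{\vee}}$ for every $\gamma\in\Pi$, so $\Pi$ and $\Pi^{\ast}$ share the same Weyl group and chamber decomposition, and $\Sigma^{\ast}$ is then visibly the family of walls of the fundamental chamber of $\Sigma$ read off in $\Pi^{\ast}$.) Everything else --- linear independence, signs of coefficients, and the reduction to positive coroots --- is routine.
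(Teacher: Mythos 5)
Your argument is correct. Note, however, that the paper does not prove this statement at all: it is quoted from Perrin (Prop.\ 11.6.13), so there is no internal proof to compare with. Your direct verification of the paper's definition of a simple system is sound and self-contained: linear independence is immediate since $\alpha^{\vee}$ is a positive multiple of $\alpha$; the sign condition follows from $\beta^{\vee}=\sum_{\alpha}\tfrac{n_{\alpha}\langle\alpha,\alpha\rangle}{\langle\beta,\beta\rangle}\alpha^{\vee}$; and the genuinely nontrivial point, integrality, is handled by your height induction, which uses only ingredients the paper already records ($(s_{\alpha}\beta)^{\vee}=s_{\alpha^{\vee}}(\beta^{\vee})$, the fact that $\Pi^{\ast}$ is a root system, hence axiom (2) gives integrality of the dual Cartan numbers) plus the standard fact that a simple reflection permutes $\Pi^{+}\setminus\{\alpha\}$ --- which is where reducedness enters, exactly as you indicate. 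This is essentially the textbook route (Perrin's own proof is closer to your parenthetical chamber argument: since $s_{\gamma}=s_{\gamma^{\vee}}$, the two systems share Weyl group and chambers, and $\Sigma^{\ast}$ consists of the indecomposable positive coroots for the same chamber), so either of your two arguments fills the gap legitimately. One small slip in your motivating aside: in $B_{2}$ the vector $e_{1}$ is not a coroot (its coroot is $2e_{1}$, which \emph{is} $(e_{1}+e_{2})+(e_{1}-e_{2})$); the intended illustration works if you phrase it for the roots of $B_{2}$ themselves, where $e_{1}=\tfrac12\bigl[(e_{1}+e_{2})+(e_{1}-e_{2})\bigr]$ shows that a basis of $E$ made of roots need not yield integral coordinates. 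This does not affect the proof, since the aside plays no logical role.
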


By this Proposition, if $\alpha \in \Pi$ is given in terms of the system $\Sigma$ of simple roots as
\begin{equation}\label{eq:altura}
\alpha = \sum_{\delta \in \Sigma} d_{\delta} \delta
\end{equation}
we should obtain an analogous expression for $\alpha^{\vee}$ with respect to the simple root system $\Sigma^{\ast}$. Indeed, it follows that
\begin{equation}\label{eq:coroot_beta}
\alpha^{\vee} = \sum_{\delta \in \Sigma} d_{\delta} \frac{\langle \delta,\delta\rangle}{\langle \alpha,\alpha \rangle} \delta^{\vee}
\end{equation}

As a consequence, if $\alpha^{\vee}=\sum_{\delta^{\vee} \in \Sigma^{\ast}} d_{\delta}^{\ast} \delta^{\vee}$ is the coroot of $\alpha$ given by \eqref{eq:altura}, the relationship between its coefficients is given by
\begin{equation*}
d_{\delta}^{\ast} = d_{\delta} \frac{\langle \delta,\delta\rangle}{\langle \alpha,\alpha \rangle}.
\end{equation*}

By Equation \ref{eq:coroot_beta}, we also conclude that if $\mathfrak{g}$ is of type $A,D,E$ then its dual root system is isomorphic to itself while the dual root system of a lie algebra of type B is isomorphic to the root system of type C (and vice-versa).

The height of the root $\alpha$, denoted by $\mathrm{ht}(\alpha)$ is the sum of the coefficients that appear in the decomposition of $\alpha$ in Equation \eqref{eq:altura}
\begin{equation*}
\mathrm{ht}(\alpha)=\sum_{\delta \in \Sigma} d_{\delta}.
\end{equation*}

\subsection{Bruhat decomposition}

If we consider the elements of $\mathcal{W}$ as product of simple reflections $s_i=s_{\alpha_i}$, $\alpha_i \in \Sigma$, it is defined the length $\ell(w)$ of $w \in \mathcal{W}$ as the number of simple reflections in any reduced decomposition of $w$. 

There is a partial order $\leq$ in the Weyl group called the Bruhat-Chevalley order: we say that $w_{1}\leqslant w_{2}$ if given a reduced decomposition $w_2 = s_{j_{1}} \cdots s_{j_{r}}$ then $w_{1}=s_{j_{i_{1}}}\cdots s_{j_{i_{k}}}$ for some $1\leqslant i_1\leqslant \cdots \leqslant i_r\leqslant r$. When there exists $w,w' \in \weyl$ such that $w'\leqslant w$ and $\ell(w) = \ell(w')+1$ we say that $w$ covers $w'$ (alternatively, $w,w'$ is a covering pair). If $w$ covers $w'$ and given a reduced decomposition $w=s_{1}\cdots s_{\ell}$ then we will denote by $\I $ the integer in $[\ell]$ such that $w' = s_{1} \cdots \widehat{s_{\I}}\cdots s_{\ell}$, where the integer $\I$ depends on $w'$ and the choice of the reduced decomposition of $w$. For convenience, we will sometimes refer to this decomposition of $w'$ as $\widehat{w}_\I$.

For the subset $\Theta \subset \Sigma$, we define the subgroup $\mathcal{W}_\Theta $ generated by the
reflections with respect to the roots $\alpha \in \Theta$. 
We denote by $\mathcal{W}^{\Theta}$ the subset of minimal representatives of the cosets of $\mathcal{W}_{\Theta}$ in $\mathcal{W}$.

The Bruhat decomposition presents flag manifolds as union of $N$-orbits, namely,
\begin{equation*}
\mathbb{F}_{\Theta }=\coprod_{w\in \mathcal{W}^{\Theta }} N\cdot wb_{\Theta }.
\end{equation*}

Each orbit $N\cdot wb_{\Theta}, w \in \mathcal{W}$, is called a Bruhat cell. It is diffeomorphic to a euclidean space and, in the case of a split real form, its dimension coincides with the length of $w$, i.e., $\dim \left( N\cdot wb_{\Theta }\right) = \ell(w).$ A Schubert variety $\mathcal{S}_{w}$ is the closure of a Bruhat cell. The Bruhat-Chevalley order also characterizes a partial order between the corresponding Schubert varieties. It also endows the flag manifolds with a cellular structure where $\mathcal{S}_{w}= {\bigcup_{u\leq w} N\cdot ub_{\Theta}}$.

\subsection{Recursive formula for the roots of \texorpdfstring{$\Pi_w$}{PIw}}
A very important role is developed by the set $\Pi_w = \Pi^+ \cap w \Pi^-$ composed of the positive roots sent to negative roots by $w^{-1}$.  If $w = s_1 \cdots s_m$ is a reduced decomposition of $w$ then $\Pi_{w}=\{\beta_1, \ldots, \beta_m\}$ where
\begin{align}\label{eq:betat}
\beta_{k} = s_1 \cdots s_{k-1}(\raiz_{k})~,~ \mbox{for} ~ 1\leq k \leq m.
\end{align} 
In particular, we have that $\ell(w)$ equals the cardinality of $\Pi_w$. In this section, we consider a slightly more general setting in which we derive a recursive formula for a root that is obtained after a finite sequence of composition of reflections over a simple root. As a consequence, we obtain a formula for those roots of $\Pi_{w}$.

Consider a finite ordered sequence of simple roots $(\raiz_{1}, \dots, \raiz_{m})$, eventually with repetition. 
We are interested in writing $s_1 \cdots s_{m-1}(\raiz_{m})$ in terms of $\raiz_{1}, \dots, \raiz_{m}$. 
When $m=1$, it is trivial. However, as $m$ increases, we may observe the occurrence of a pattern. Let us show what happens for $m=2,3,4$ after successive applications of Equation \eqref{eq:acao_raizes}.
\begin{align*}
s_{1}(\raiz_{2}) & = \raiz_{2} - \CKalpha{1}{2} \raiz_{1} \\
s_{1}s_{2}(\raiz_{3}) &= \raiz_{3} - \CKalpha{2}{3}\raiz_{2} + \left( - \CKalpha{1}{3}  + \CKalpha{1}{2} \CKalpha{2}{3}\right)\raiz_{1} \\
s_{1}s_{2}s_{3}(\raiz_{4}) &= \raiz_{4} - \CKalpha{3}{4} \raiz_{3} +  \left( - \CKalpha{2}{4}  + \CKalpha{2}{3} \CKalpha{3}{4} \right)\raiz_{2} + \\
 + & \left( -\CKalpha{1}{4} + \CKalpha{1}{2}\CKalpha{2}{4} + \CKalpha{1}{3}\CKalpha{3}{4}  - \CKalpha{1}{2} \CKalpha{2}{3} \CKalpha{3}{4} \right) \raiz_1.
\end{align*}

By these equations, it turns out that $s_1 \cdots s_{m-1}(\raiz_{m})$ is a combination of some integer coefficients in terms of the roots $\raiz_{1}, \dots, \raiz_{m}$. The coefficients are given as alternating sums of products of the Killing numbers of the roots that belong to a specific interval. Besides, as $m$ increases, the sum is given by products of a greater number of factors. We now proceed to obtain a general formula for this expression. 

Let us define a sum formula in terms of the ordered sequence of roots $(\raiz_{1}, \dots, \raiz_{m})$: given integers $x,y$ such that $1 \leqslant x<y \leqslant m$ and  $0\leqslant l < y-x$, we define
\begin{equation}
P^{l}_{x,y}(\raiz_{1}, \dots, \raiz_{m}) = \sum_{x<j_1<\cdots<j_l<y} \CKalpha{x}{j_{1}} \CKalpha{j_{1}}{j_{2}} \cdots  \CKalpha{j_{l-1}}{j_{l}}\CKalpha{j_{l}}{y},
\end{equation}
which the sum runs among all choices of $l$-uples $x<j_1<\cdots<j_l<y$ of the following product $\CKalpha{x}{j_{1}} \CKalpha{j_{1}}{j_{2}} \cdots \CKalpha{j_{l}}{y}$. In particular, for $l=0$ we define
\begin{equation}
P_{x,y}^{0}(\raiz_{1}, \dots, \raiz_{m}) = \CKalpha{x}{y} 
\end{equation}
as the Killing number of the $x$-th coroot with the $y$-th root of the sequence $(\raiz_{1}, \dots, \raiz_{m})$.

Notice that this definition depends on the order of the roots and the integers $x$ and $y$ corresponds respectively to the position in the sequence that gives the first coroot and the last root of the factors of the product.

By definition of $P$, we have the following property: suppose that $x>a$, for some integer $a$. Then
\begin{equation}\label{eq:propertygeneral}
P^{l}_{x,y}(\raiz_{1}, \dots, \raiz_{m})=P^{l}_{x-a,y-a}(\raiz_{a+1}, \dots, \raiz_{m}).
\end{equation}
In particular, if $x>1$ then we have
\begin{equation}\label{eq:property}
P^{l}_{x,y}(\raiz_{1}, \dots, \raiz_{m})=P^{l}_{x-1,y-1}(\raiz_{2}, \dots, \raiz_{m}).
\end{equation}

The next lemma presents a recursive formula for $P$.
\begin{lem}\label{lem:recP}
Given $x,y,l$ such that $1 \leqslant x<y \leqslant m$ and $0\leqslant l < y-x$, we have that
\begin{equation*}
P^{l+1}_{x,y}(\raiz_{1}, \dots, \raiz_{m}) = \sum_{k=x+1}^{y-l-1} \CKalpha{x}{k} P^{l}_{ k, y}(\raiz_{1}, \dots, \raiz_{m})
\end{equation*}
\end{lem}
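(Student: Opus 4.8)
The plan is to prove the recursion directly from the definition of $P^{l+1}_{x,y}$ by splitting off the first index in the chain of Killing numbers. Recall that
\[
P^{l+1}_{x,y}(\raiz_{1}, \dots, \raiz_{m}) = \sum_{x<j_1<\cdots<j_{l+1}<y} \CKalpha{x}{j_{1}} \CKalpha{j_{1}}{j_{2}} \cdots \CKalpha{j_{l}}{j_{l+1}}\CKalpha{j_{l+1}}{y},
\]
a sum over strictly increasing $(l+1)$-tuples in the open interval $(x,y)$. First I would partition this index set according to the value of the smallest index $j_1$: writing $k = j_1$, the remaining indices $j_2 < \cdots < j_{l+1}$ range over all strictly increasing $l$-tuples with $k < j_2 < \cdots < j_{l+1} < y$. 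For a fixed $k$, factoring out $\CKalpha{x}{k}$ leaves exactly $\sum_{k<j_2<\cdots<j_{l+1}<y}\CKalpha{k}{j_2}\cdots\CKalpha{j_{l+1}}{y}$, which is precisely $P^{l}_{k,y}(\raiz_{1},\dots,\raiz_{m})$ by definition (the case $l=0$ being handled by the convention $P^{0}_{k,y}=\CKalpha{k}{y}$, so the argument is uniform in $l\geqslant 0$).

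The only remaining point is to identify the range of $k$. Since $k = j_1$ must satisfy $x < k$ and we need room for $l$ further strictly increasing indices $j_2,\dots,j_{l+1}$ between $k$ and $y$, we need $k + l < y$, i.e. $k \leqslant y-l-1$; conversely any $k$ with $x+1 \leqslant k \leqslant y-l-1$ does admit such completions (and for such $k$ the quantity $P^{l}_{k,y}$ is well-defined because $0 \leqslant l < y-k$). Hence the sum over $k$ runs exactly from $x+1$ to $y-l-1$, giving
\[
P^{l+1}_{x,y}(\raiz_{1}, \dots, \raiz_{m}) = \sum_{k=x+1}^{y-l-1} \CKalpha{x}{k}\, P^{l}_{k, y}(\raiz_{1}, \dots, \raiz_{m}),
\]
as claimed.

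This is essentially a bookkeeping argument, so there is no serious obstacle; the one place to be careful is the edge case and the empty-sum conventions — when $y - x = l+2$ the outer sum has a single term $k$ and $P^{l}_{k,y}$ is a genuine chain, while when $l=0$ one must check that the formula collapses correctly to $P^{1}_{x,y} = \sum_{k=x+1}^{y-1}\CKalpha{x}{k}\CKalpha{k}{y}$, matching the definition with $j_1 = k$. I would state these conventions explicitly at the start so the reindexing is unambiguous, and then the displayed computation above is the whole proof.
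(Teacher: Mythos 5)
Your proof is correct and is essentially the paper's argument read in the opposite direction: the paper starts from the right-hand side, expands $P^{l}_{k,y}$ and merges the sum over $k$ with the sum over $l$-tuples into a single sum over $(l+1)$-tuples, while you partition the left-hand side according to the smallest index $j_1=k$; the bookkeeping (range of $k$, the $l=0$ convention) matches in both. No gap to report.
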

\begin{proof}
If $l=0$, it follows that
\begin{equation*}
\smashoperator{\sum_{k=x+1}^{y-1}} \CKalpha{x}{k} P^{0}_{k, y}(\raiz_{1}, \dots, \raiz_{m}) \!=\! \smashoperator{\sum_{k=x+1}^{y-1}} \CKalpha{x}{k} \CKalpha{k}{y} \!=\! \smashoperator{\sum_{x<k<y}} \CKalpha{x}{k}\CKalpha{k}{y} \!=\! P^{1}_{x,y}(\raiz_{1}, \dots, \raiz_{m}).
\end{equation*}
For $l>0$, it follows that
\begin{align*}
\smashoperator{\sum_{k=x+1}^{y-l-1}} \CKalpha{x}{k} P^{l}_{k, y}(\raiz_{1}, \dots, \raiz_{m}) & = \smashoperator{\sum_{k=x+1}^{y-l-1}} \CKalpha{x}{k} \hspace{-1em} \sum_{k<j_1<\cdots<j_l<y}\hspace{-1em} \CKalpha{k}{j_{1}} \CKalpha{j_{1}}{j_{2}} \cdots  \CKalpha{j_{l-1}}{j_{l}}\CKalpha{j_{l}}{y}\\
& = \hspace{-1em}  \sum_{x<k<j_1<\cdots<j_l<y} \hspace{-1em} \CKalpha{x}{k} \CKalpha{k}{j_{1}} \CKalpha{j_{1}}{j_{2}} \cdots  \CKalpha{j_{l-1}}{j_{l}}\CKalpha{j_{l}}{y}\\
& = P^{l+1}_{x,y}(\raiz_{1}, \dots, \raiz_{m}). \qedhere
\end{align*}
\end{proof}

It motivates the following general result. 

\begin{prop}\label{prop:raizes} Let $\raiz_{1}, \dots, \raiz_{m}$, with $m>1$, be an ordered sequence of simple roots whose simple reflections are, respectively, $s_{1}, \dots, s_{m}$. Then
\begin{equation}\label{eq:beta_n}
s_1 \cdots s_{m-1}(\raiz_{m}) =\raiz_{m} + \sum_{i=1}^{m-1} \left(\sum_{l=0}^{m-i-1} (-1)^{l-1} P^{l}_{i,m}(\raiz_{1},\dots, \raiz_{m}) \right) \cdot \raiz_{i}.
\end{equation}
\end{prop}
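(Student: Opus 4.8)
The plan is to prove Equation \eqref{eq:beta_n} by induction on $m$, using the recursion $s_1\cdots s_{m-1}(\raiz_m) = s_1\bigl(s_2\cdots s_{m-1}(\raiz_m)\bigr)$ together with the shift property \eqref{eq:property} of the $P$-symbols and the recursive formula of Lemma \ref{lem:recP}. The base case $m=2$ is just $s_1(\raiz_2) = \raiz_2 - \CKalpha{1}{2}\raiz_1$, which matches the right-hand side since the only term is $i=1$, $l=0$, giving $(-1)^{-1}P^0_{1,2}\,\raiz_1 = -\CKalpha{1}{2}\raiz_1$.

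For the inductive step, assume the formula holds for sequences of length $m-1$. Applying it to the shifted sequence $(\raiz_2,\dots,\raiz_m)$, whose reflections are $s_2,\dots,s_m$, I get
\begin{equation*}
s_2\cdots s_{m-1}(\raiz_m) = \raiz_m + \sum_{i=2}^{m-1}\left(\sum_{l=0}^{m-i-1}(-1)^{l-1}P^l_{i-1,m-1}(\raiz_2,\dots,\raiz_m)\right)\raiz_i,
\end{equation*}
and by \eqref{eq:property} each $P^l_{i-1,m-1}(\raiz_2,\dots,\raiz_m)$ equals $P^l_{i,m}(\raiz_1,\dots,\raiz_m)$, so this is exactly the $i\geq 2$ part of the target formula (with $\raiz_m$ playing the role of the leading term). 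Now I apply $s_1$, using \eqref{eq:acao_raizes}: $s_1(\raiz_j) = \raiz_j - \CKalpha{1}{j}\raiz_1$ for each $j$ appearing. The terms $\raiz_m$ and $\raiz_i$ ($2\leq i\leq m-1$) are reproduced with their coefficients, and each contributes an extra multiple of $\raiz_1$. Collecting the coefficient of $\raiz_1$ is the heart of the argument: it is
\begin{equation*}
-\CKalpha{1}{m} - \sum_{i=2}^{m-1}\left(\sum_{l=0}^{m-i-1}(-1)^{l-1}P^l_{i,m}(\raiz_1,\dots,\raiz_m)\right)\CKalpha{1}{i}.
\end{equation*}
I must show this equals $\sum_{l=0}^{m-2}(-1)^{l-1}P^l_{1,m}(\raiz_1,\dots,\raiz_m)$, i.e. the $i=1$ coefficient in \eqref{eq:beta_n}.

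The main obstacle is precisely this bookkeeping identity for the $\raiz_1$-coefficient. The first term $-\CKalpha{1}{m}$ is the $l=0$ summand $(-1)^{-1}P^0_{1,m} = -\CKalpha{1}{m}$. For the remaining sum, I reindex: the double sum over $2\leq i\leq m-1$ and $0\leq l\leq m-i-1$ can be reorganized by the total "length" $l'=l+1$, and using Lemma \ref{lem:recP} in the form $\sum_{k=2}^{m-l'}\CKalpha{1}{k}P^{l'-1}_{k,m}(\raiz_1,\dots,\raiz_m) = P^{l'}_{1,m}(\raiz_1,\dots,\raiz_m)$ (with $x=1$), the sign $(-1)^{l-1}=(-1)^{l'-2}=(-1)^{l'}$ propagates correctly: each $-\CKalpha{1}{i}\cdot(-1)^{l-1}P^l_{i,m} = (-1)^{l'-1}\CKalpha{1}{i}P^{l'-1}_{i,m}$, and summing over $i$ (the dummy $k$ in the lemma) gives $(-1)^{l'-1}P^{l'}_{1,m}$. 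Letting $l'$ range from $1$ to $m-2$ then assembles exactly the missing summands of the $i=1$ coefficient. The only care needed is to check that the index ranges for $l$ and $k$ match the constraint $0\leq l<y-x$ in Lemma \ref{lem:recP} and that no boundary terms are dropped; this is routine once the reindexing is set up, so I would present it as a short displayed computation rather than belabor it.
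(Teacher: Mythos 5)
Your proposal is correct and follows essentially the same route as the paper's own proof: induction on $m$ by peeling off $s_1$, applying the inductive hypothesis to the shifted sequence $(\raiz_2,\dots,\raiz_m)$ with the shift property \eqref{eq:property}, and then using Lemma \ref{lem:recP} (after swapping the order of summation) to identify the coefficient of $\raiz_1$. Your sign and index bookkeeping, including the reindexing $l'=l+1$ and the range $2\leq i\leq m-l'$, matches the paper's computation exactly.
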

\begin{proof}
We will prove by induction in the number $m$ of roots. For $m=2$, we have that $s_{1}(\raiz_{2}) = \raiz_{2} - \CKalpha{1}{2}\raiz_{1}$ which satisfies Equation \eqref{eq:beta_n}.

For $m>2$, denote $s_1 \cdots s_{m-1}(\raiz_{m}) = s_{1} (\delta)$ such that $\delta = s_2 \cdots s_{m-1}(\raiz_{m})$. If we consider the ordered sequence of roots $\raiz_{1}' = \raiz_{2}, \dots, \raiz_{m-1}'= \raiz_{m}$ which has $(m-1)$ elements, it is possible to apply the inductive hypothesis in $\delta$ such that
\begin{align*}
\delta =s_2 \cdots s_{m-1}(\raiz_{m}) &=\raiz_{m-1}' + \sum_{i=1}^{(m-1)-1} \left(\sum_{l=0}^{(m-1)-i-1} (-1)^{l-1} P^{l}_{i,m-1}(\raiz_{1}',\dots, \raiz_{m-1}') \right) \cdot \raiz_{i}'\\
&= \raiz_{m} + \sum_{i=2}^{m-1} \left(\sum_{l=0}^{m-i-1} (-1)^{l-1} P^{l}_{i-1,m-1}(\raiz_{2},\dots, \raiz_{m}) \right) \cdot \raiz_{i}.
\end{align*}

For every $2\leqslant i \leqslant m-1$ and $0\leqslant l \leqslant m-i-1$, by Equation \eqref{eq:property}, we have that $P^{l}_{i-1,m-1}(\raiz_{2}, \dots, \raiz_{m}) = P^{l}_{i,m}(\raiz_{1},\raiz_{2}, \dots, \raiz_{m})$. Then,
\begin{align*}%\label{eq:prop1}
\nonumber s_1 (\delta) &=s_{1} \left( \raiz_{m} + \sum_{i=2}^{m-1} \left(\sum_{l=0}^{m-i-1} (-1)^{l-1} P^{l}_{i,m}(\raiz_{1},\raiz_{2},\dots, \raiz_{m}) \right) \cdot \raiz_{i}\right)\\
&= \raiz_{m} + \sum_{i=2}^{m-1} \left(\sum_{l=0}^{m-i-1} (-1)^{l-1} P^{l}_{i,m}(\raiz_{1},\raiz_{2},\dots, \raiz_{m}) \right) \cdot \raiz_{i}\\\nonumber & \qquad - \CK{\raiz_{1}}{\raiz_{m} + \sum_{i=2}^{m-1} \left(\sum_{l=0}^{m-i-1} (-1)^{l-1} P^{l}_{i,m}(\raiz_{1},\raiz_{2},\dots, \raiz_{m}) \right) \cdot \raiz_{i}} \cdot\, \raiz_{1}
\end{align*}

All coefficients which accompany $\raiz_{2},\dots, \raiz_{m}$ are equal to the respective coefficients in  Equation \eqref{eq:beta_n}. It remains to verify that it is also true for the coefficients that accompany the root $\raiz_{1}$ in both equations, i.e., 
\begin{equation*}%\label{eq:prop2}
\sum_{l=0}^{m-2}\! (-1)^{l-1} P^{l}_{1,m}(\raiz_{1},\dots, \raiz_{m}) \!=\! - \CK{\raiz_{1}}{\raiz_{m}} -\!\! \sum_{i=2}^{m-1} \sum_{l=0}^{m-i-1}\!\!\! (-1)^{l-1} P^{l}_{i,m}(\raiz_{1},\raiz_{2},\dots, \raiz_{m}) \cdot \CK{\raiz_{1}}{\raiz_{i}}.
\end{equation*}

We will show that the right side is equal to the left one. Firstly, by a change in the order of the summation, we have that
\begin{align*}
\sum_{i=2}^{m-1} \sum_{l=0}^{m-i-1} (-1)^{l-1} & P^{l}_{i,m}(\raiz_{1},\raiz_{2},\dots, \raiz_{m})  \cdot \CK{\raiz_{1}}{\raiz_{i}} \\
&= \sum_{l=0}^{m-3}  (-1)^{l-1} \sum_{i=2}^{m-l-1}  \CK{\raiz_{1}}{\raiz_{i}} P^{l}_{i,m}(\raiz_{1},\raiz_{2},\dots, \raiz_{m})\\
&= \sum_{l=0}^{m-3}  (-1)^{l-1} P^{l+1}_{1,m}(\raiz_{1},\dots, \raiz_{m})  \quad (\mbox{by Lemma \ref{lem:recP}})
\end{align*}

Finally, by definition, $- \CK{\raiz_{1}}{\raiz_{m}} = - P^{0}_{1,m}(\raiz_{1},\dots, \raiz_{m})$. Hence,
\begin{align*}
 - &\CK{\raiz_{1}}{\raiz_{m}} -  \sum_{i=2}^{m-1} \sum_{l=0}^{m-i-1} (-1)^{l-1} P^{l}_{i,m}(\raiz_{1},\raiz_{2},\dots, \raiz_{m})  \cdot \CK{\raiz_{1}}{\raiz_{i}} \\
& = -P^{0}_{1,m}(\raiz_{1},\dots, \raiz_{m})+  \sum_{l=0}^{m-3}  (-1)^{l} P^{l+1}_{1,m}(\raiz_{1},\dots, \raiz_{m}) = \sum_{l=0}^{m-2}  (-1)^{l-1} P^{l}_{ 1,m}(\raiz_{1},\dots, \raiz_{m}). \qedhere
\end{align*}
\end{proof}

\section{A new formula for coefficient of the boundary map}\label{sec:height}

In this section we present a formula for the coefficients of the boundary map of real flag manifolds by means of the height of some roots in the Lie algebra. Our main applications will be given alone in the context of split real forms.

\subsection{Algebraic Expression for the coefficients}
Let us begin by reviewing some main results about the determination of the cellular homology coefficients following \cite{RSm19}.
We start in the context of the maximal flag manifold $\mathbb{F}$. Given a Schubert variety $\mathcal{S}_w$, we fix once and for all reduced decompositions
\begin{equation*}
w=s_{1}\cdots s_{\ell}
\end{equation*}
as a product of simple reflections, for each $w\in \mathcal{W}$, with $\ell=\ell(w)$. Let $\mathcal{C}$ be the $\mathbb{Z}$-module freely generated by $\mathcal{S}_{w}$, $w\in \mathcal{W}$. The boundary map $\partial$ defined over $\mathcal{C}$ is given by $\partial \mathcal{S}_{w}=\sum_{w^{\prime}}c(w,w^{\prime })\mathcal{S}_{w^{\prime }}$, where $c(w,w^{\prime })\in \mathbb{Z}$ in such way that non-trivial coefficients may occur when $w$ covers $w'$. Furthermore, the non-trivial coefficients must be equal to $\pm 2$ (\cite{RSm19}, Theorem 2.2). 

Notice also that, by \cite{RSm19} Proposition 1.10, the condition for $w,w'$ to be a covering pair is equivalent to say that if $w=s_{1}\cdots s_{\ell}$ is a reduced decomposition of $w\in \mathcal{W}$ as a product of simple reflections, then $w'=s_{1}\cdots \widehat{s_{\I}}\cdots s_{\ell}$ is a uniquely defined reduced decomposition with $\mathfrak{g}(\alpha_{\I})\cong \mathfrak{sl}(2,\mathbb{R})$.

It will also be useful to denote by $v=s_{1} \cdots s_{\I-1}$ and $u=s_{\I+1}\cdots s_{\ell}$ such that $w'=v\cdot u$. There are roots not necessarily simple $\beta=v(\alpha_{\I})$ and $\gamma=u^{-1}(\alpha_{\I})$ such that
\begin{equation*}
w =s_{\beta}\cdot w' \quad \mbox {and} \quad w = w'\cdot s_{\gamma}.
\end{equation*}

Let us determine if $c(w,w')$ is either $0$ or $\pm 2$. We define
\begin{equation}\label{forsigmasoma} 
\sigma(w,w^{\prime}) =\sum_{\raiz \in \Pi _{u}}\langle \alpha_{\I}^{\vee},\raiz \rangle \cdot \dim \mathfrak{g}_{\raiz } 
\end{equation}

For $w \in \mathcal{W}$, let
\begin{equation*}%\label{forphisoma}
\phi (w)=\sum_{\raiz \in \Pi _{w}}\dim \mathfrak{g}_{\raiz}\cdot \raiz
\end{equation*}

The following results show how we determine when $c(w,w')$ is either $0$ or $\pm 2$.

\begin{prop}[\cite{RSm19}, Proposition 2.7] Let $\beta$ the unique root such that $w=s_{\beta}w'$. Then 
\begin{equation*}
\phi(w)-\phi(w')=\kappa(w,w')\cdot \beta
\end{equation*}
where $\kappa(w,w')=1-\sigma(w,w')$.
\end{prop}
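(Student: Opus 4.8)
The plan is to compute $\phi(w)-\phi(w')$ directly from the parametrization of $\Pi_w$ given in Equation~\eqref{eq:betat}, comparing the reduced decompositions $w=s_1\cdots s_\ell$ and $w'=s_1\cdots\widehat{s_\I}\cdots s_\ell$ factor by factor. Recall $v=s_1\cdots s_{\I-1}$ and $u=s_{\I+1}\cdots s_\ell$, and for $\I<k\le\ell$ put $\mu_k=s_{\I+1}\cdots s_{k-1}(\alpha_k)$, so that $\{\mu_{\I+1},\dots,\mu_\ell\}$ is exactly the set $\Pi_u$ (again by \eqref{eq:betat}). Labelling the roots of $\Pi_w$ and of $\Pi_{w'}$ as in \eqref{eq:betat}, one checks that the first $\I-1$ roots of $\Pi_w$ and of $\Pi_{w'}$ coincide (they are the same partial products $s_1\cdots s_{k-1}(\alpha_k)$), that the $\I$-th root of $\Pi_w$ is $\beta=v(\alpha_\I)$ and has no counterpart in $\Pi_{w'}$, and that for each $k>\I$ the corresponding root of $\Pi_w$ is $v s_\I(\mu_k)$ while the matching root of $\Pi_{w'}$ is $v(\mu_k)$. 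Consequently, in $\phi(w)-\phi(w')$ the first $\I-1$ contributions cancel, leaving
\[
\phi(w)-\phi(w')=\dim\mathfrak{g}_{\beta}\cdot\beta+\sum_{k=\I+1}^{\ell}\Bigl(\dim\mathfrak{g}_{v s_\I(\mu_k)}\cdot v s_\I(\mu_k)-\dim\mathfrak{g}_{v(\mu_k)}\cdot v(\mu_k)\Bigr).
\]

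Next I would simplify the summands. By \eqref{eq:acao_raizes},
\[
v s_\I(\mu_k)=v\bigl(\mu_k-\CK{\alpha_\I}{\mu_k}\alpha_\I\bigr)=v(\mu_k)-\CK{\alpha_\I}{\mu_k}\,\beta ,
\]
so the two roots in each summand differ by an integer multiple of $\beta$. To pull the dimensions out I would use that restricted-root multiplicities are $\weyl$-invariant: any element of $\weyl$ is realized by $\operatorname{Ad}$ of a representative in the normalizer of $\mathfrak{a}$ in $K$, which restricts to an isomorphism $\mathfrak{g}_\raiz\cong\mathfrak{g}_{w\raiz}$; hence $\dim\mathfrak{g}_{v s_\I(\mu_k)}=\dim\mathfrak{g}_{v(\mu_k)}=\dim\mathfrak{g}_{\mu_k}$. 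Each summand therefore equals $-\dim\mathfrak{g}_{\mu_k}\,\CK{\alpha_\I}{\mu_k}\,\beta$, and summing over $k>\I$ yields, by the defining formula~\eqref{forsigmasoma},
\[
\sum_{k=\I+1}^{\ell}\Bigl(\cdots\Bigr)=-\Bigl(\sum_{\raiz\in\Pi_u}\CK{\alpha_\I}{\raiz}\dim\mathfrak{g}_{\raiz}\Bigr)\beta=-\sigma(w,w')\,\beta .
\]
For the remaining term, the same invariance gives $\dim\mathfrak{g}_\beta=\dim\mathfrak{g}_{\alpha_\I}$, and the covering condition $\mathfrak{g}(\alpha_\I)\cong\mathfrak{sl}(2,\mathbb{R})$ forces $\dim\mathfrak{g}_{\alpha_\I}=1$. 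Putting everything together,
\[
\phi(w)-\phi(w')=\bigl(1-\sigma(w,w')\bigr)\beta=\kappa(w,w')\,\beta .
\]

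The computation itself is short; the point that requires care is the bookkeeping in the first step --- matching each root of $\Pi_{w'}$ with the right root of $\Pi_w$ and confirming that exactly one root, namely $\beta$, is left unpaired --- together with the observation that all the roots $\beta_k$, $v(\mu_k)$ and $\mu_k$ appearing above are genuine positive roots, which holds because the decompositions $w=s_1\cdots s_\ell$, $w'=s_1\cdots\widehat{s_\I}\cdots s_\ell$ and $u=s_{\I+1}\cdots s_\ell$ are all reduced. The two structural inputs --- $\weyl$-invariance of the numbers $\dim\mathfrak{g}_\raiz$ and the identity $\dim\mathfrak{g}_{\alpha_\I}=1$ coming from $\mathfrak{g}(\alpha_\I)\cong\mathfrak{sl}(2,\mathbb{R})$ --- are standard and may simply be cited.
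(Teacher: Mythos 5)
Your proof is correct: the decomposition of the two root sets as $\Pi_w=\Pi_v\cup\{\beta\}\cup vs_{\I}\Pi_u$ versus $\Pi_{w'}=\Pi_v\cup v\Pi_u$ (with all elements positive because the three words are reduced), the identity $vs_{\I}(\mu)=v(\mu)-\CK{\alpha_{\I}}{\mu}\beta$, the $\weyl$-invariance of the multiplicities $\dim\mathfrak{g}_{\raiz}$, and $\dim\mathfrak{g}_{\alpha_{\I}}=1$ from $\mathfrak{g}(\alpha_{\I})\cong\mathfrak{sl}(2,\mathbb{R})$ give exactly $\phi(w)-\phi(w')=(1-\sigma(w,w'))\beta$. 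The present paper does not reprove this statement (it is quoted from \cite{RSm19}), and your computation is essentially the same pairing argument used in that source, so there is nothing to add.
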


\begin{thm}[\cite{RSm19}, Thm. 2.8, \cite{Koc95}, Thm. 1.1.4]
Suppose that $w$ covers $w'$. Then the coefficient $c(w,w')$ is given as follows:
\begin{equation*}
c(w,w')=\pm\left( 1+(-1)^{\kappa(w,w')} \right)
\end{equation*}
\end{thm}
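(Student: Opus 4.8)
The plan is to compute $c(w,w')$ from its topological meaning --- the incidence number, in the cellular chain complex $\mathcal{C}$ of $\mathbb{F}$, of the $\ell$-cell $N\cdot wb_{0}$ with the $(\ell-1)$-cell $N\cdot w'b_{0}$ --- and then to reduce that computation to an orientation count living inside a single $\mathfrak{sl}(2,\mathbb{R})$-orbit, whose parity will turn out to be exactly $\kappa(w,w')$.

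First I would fix the chosen reduced decomposition $w=s_{1}\cdots s_{\ell}$, pick for each $i$ a generator $X_{i}$ of the root space $\g_{\alpha_{i}}$ and a lift $\bar{s}_{i}\in K$ of $s_{i}$, and realize the Bruhat cell $N\cdot wb_{0}$ as the injective image of
\begin{equation*}
(t_{1},\dots,t_{\ell})\longmapsto \exp(t_{1}X_{1})\,\bar{s}_{1}\cdots\exp(t_{\ell}X_{\ell})\,\bar{s}_{\ell}\cdot b_{0}.
\end{equation*}
Since the $i$-th factor sweeps out a chart of a real projective line --- the flag variety of the rank-one subgroup $G(\alpha_{i})$, a copy of $\mathbb{RP}^{1}$ --- this map extends to a surjection $\pi_{w}\colon Z_{w}\to\schub_{w}$ from the compact $\ell$-manifold $Z_{w}$ built as the iterated $\mathbb{RP}^{1}$-bundle attached to the word, with $\pi_{w}$ restricting to a diffeomorphism from a dense open cell onto $N\cdot wb_{0}$. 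Composing $\pi_{w}$ with a relative degree-one collapse $(D^{\ell},\partial D^{\ell})\to(Z_{w},\,\text{lower strata})$ yields a characteristic map $\Phi_{w}$ for the cell $N\cdot wb_{0}$. By \cite{RSm19}, Proposition 1.10, an $(\ell-1)$-cell can carry a nonzero coefficient only when it is $N\cdot w'b_{0}$ with $w'=\widehat{w}_{\I}$ for a (necessarily unique) position $\I$ such that $\g(\alpha_{\I})\cong\mathfrak{sl}(2,\mathbb{R})$; in that case $c(w,w')$ is the degree of the composite
\begin{equation*}
S^{\ell-1}\xrightarrow{\ \Phi_{w}|_{\partial D^{\ell}}\ }\mathbb{F}^{(\ell-1)}\longrightarrow \mathbb{F}^{(\ell-1)}/\mathbb{F}^{(\ell-2)}\longrightarrow S^{\ell-1},
\end{equation*}
the last arrow being the projection onto the wedge summand indexed by $w'$.

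Next I would localize the count at $N\cdot w'b_{0}$. In the coordinates $(t_{1},\dots,t_{\ell})$ on $N\cdot wb_{0}$, the part of $\partial D^{\ell}$ that is sent into the open cell $N\cdot w'b_{0}$ is the locus where $t_{\I}\to\pm\infty$ while all other coordinates remain finite; forgetting $t_{\I}$ identifies this locus with two copies of $N\cdot w'b_{0}$ parametrized by $(t_{i})_{i\ne\I}$, whose trailing block $(t_{\I+1},\dots,t_{\ell})$ is precisely the one attached to $u=s_{\I+1}\cdots s_{\ell}$, hence indexed by $\Pi_{u}$. Thus the degree above is the sum of two local degrees of $\Phi_{w}$, one along the sheet $t_{\I}\to+\infty$ and one along $t_{\I}\to-\infty$; each is $\pm 1$, so $c(w,w')\in\{0,\pm2\}$, and everything comes down to the relative sign of the two sheets: $c(w,w')=\pm(1+(-1)^{\kappa(w,w')})$ will follow once that relative sign is shown to be $(-1)^{\kappa(w,w')}$.

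In the $\I$-th factor $\cong\mathbb{RP}^{1}$ the directions $t_{\I}\to+\infty$ and $t_{\I}\to-\infty$ approach the \emph{same} stratum of $Z_{w}$ --- the one over which $\pi_{w}$ factors through $Z_{w'}$ --- so the two sheets differ only in the way the iterated-bundle structure of $Z_{w}$ is glued across the face $\{t_{\I}=\infty\}$. That gluing is the monodromy of the trailing block of coordinates (those indexed by $\Pi_{u}$) around this face, and it is implemented by a torus element conjugate to $\bar{s}_{\I}^{\,2}$, which acts on each root space $\g_{\raiz}$ by $(-1)^{\langle\alpha_{\I}^{\vee},\raiz\rangle}$; the total effect on the trailing block is therefore $\prod_{\raiz\in\Pi_{u}}(-1)^{\langle\alpha_{\I}^{\vee},\raiz\rangle\dim\g_{\raiz}}=(-1)^{\sigma(w,w')}$. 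Combining this with the auxiliary signs forced by the orientation conventions --- the outward-normal rule on $\partial D^{\ell}$ and the chart transition at the point at infinity of the $\I$-th $\mathbb{RP}^{1}$ --- shifts $\sigma(w,w')$ to $1-\sigma(w,w')$, so that the relative sign of the two sheets is $(-1)^{1-\sigma(w,w')}$, which equals $(-1)^{\kappa(w,w')}$ by Proposition 2.7 of \cite{RSm19}. Hence the two sheets reinforce when $\kappa(w,w')$ is even --- giving $c(w,w')=\pm2$ --- and cancel when it is odd --- giving $c(w,w')=0$ --- which is the asserted formula. The step I expect to be the genuine obstacle is exactly this sign bookkeeping: verifying that the local model of $\pi_{w}$ near $\{t_{\I}=\infty\}$ really factors through a single $\mathfrak{sl}(2,\mathbb{R})$-orbit, pinning down the element that realizes the monodromy together with its weight $\langle\alpha_{\I}^{\vee},\raiz\rangle$ on each $\g_{\raiz}$, and tracking the auxiliary $\pm1$'s that turn $\sigma$ into $1-\sigma$ --- i.e. the reason the answer is $1+(-1)^{\kappa}$ rather than $1+(-1)^{\sigma}$. (For a split real form all $\dim\g_{\raiz}=1$, so $\sigma(w,w')=\sum_{\raiz\in\Pi_{u}}\langle\alpha_{\I}^{\vee},\raiz\rangle$; the multiplicities are carried along for the general case, exactly as in the definition of $\sigma$.) Alternatively one could run Kocherlakota's Morse-theoretic argument, counting negative-gradient trajectories of a height function on $\mathbb{F}$ whose critical set is $\{wb_{0}\}_{w\in\mathcal{W}}$; it reduces to the same rank-one computation and the same parity dichotomy.
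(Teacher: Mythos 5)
The paper itself does not prove this statement---it imports it from \cite{RSm19}, Thm.~2.8 and \cite{Koc95}, Thm.~1.1.4---and your sketch reconstructs essentially the argument of those sources: characteristic maps built from the iterated rank-one ($\mathbb{RP}^1$) construction attached to the reduced word, the observation that the attaching sphere meets the open cell $N\cdot w'b_0$ in exactly the two sheets $t_{\I}\to\pm\infty$ (so $c(w,w')\in\{0,\pm2\}$), and the relative sign of the sheets computed from the element $\bar{s}_{\I}^{\,2}\in M$ acting on each $\g_{\raiz}$, $\raiz\in\Pi_u$, by $(-1)^{\langle\alpha_{\I}^{\vee},\raiz\rangle}$, giving $(-1)^{\sigma(w,w')}$ up to one further sign. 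The single step you flag but do not verify---the shift from $\sigma$ to $1-\sigma$---is precisely where the cited proof picks up its extra $-1$ (the two endpoints of the $\I$-th coordinate carry opposite outward orientations), so your identification of the mechanism is correct and the proposal follows the same route as the proof the paper relies on.
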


Now, we address the question about the determination of the signal. This method is developed in \cite{RSm19} whose argument is based on the reduced decompositions of the elements $w\in \mathcal{W}$.

For each $w$ fix a reduced decomposition. There is a first $(-1)^\I$ ingredient which is related with the deleted position. The second component appears when the fixed reduced decomposition for $w'$ is not equal to that $\widehat{w}_\I$. According to \cite{RSm19} Proposition 1.9, there are characteristic maps for $\mathcal{S}_{w'}$ given by $\Phi_{w'}\colon B_{w'}\rightarrow \mathcal{S}_{w'}$ and $\Phi_{\widehat{w}_\I}\colon B_{\widehat{w}_\I}\rightarrow \mathcal{S}_{w'}$, where $B_{w'}$ and $B_{\widehat{w}_\I}$ are balls of dimension $\ell(w')$. The first map is obtained by the choice of reduced decomposition for $w'$ whereas the latter follows from the deletion operation. In addition, there is a property where both maps $\Phi_{w'}$ and $\Phi_{\widehat{w}_\I}$ are diffeomorphisms when restricted to the interior of the respective balls.

\begin{thm}[\cite{RSm19}, Theorem 2.8]\label{thm:rabelosanmartin}
$\displaystyle c(w,w^{\prime })=(-1)^{\I}\cdot\deg\left(\Phi_{w'}^{-1}\circ \Phi_{\widehat{w}_\I} \right)\cdot(1+(-1)^{\kappa \left( w,w^{\prime }\right)})$.
\end{thm}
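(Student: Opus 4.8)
The plan is to read the integer $c(w,w')$ off the cellular chain complex of $\mathbb{F}$ as the degree of a map of $(\ell-1)$-spheres, with $\ell=\ell(w)$, to localize that map near the open cell $N\cdot w'b_{0}$, and then to peel off the three displayed factors one at a time. By the cellular boundary formula, $c(w,w')$ is the degree of the composite
\[
S^{\ell-1}=\partial B_w\ \xrightarrow{\ \Phi_w\ }\ \mathbb{F}^{(\ell-1)}\ \longrightarrow\ \mathbb{F}^{(\ell-1)}/\mathbb{F}^{(\ell-2)}\ \longrightarrow\ S^{\ell-1},
\]
where $\mathbb{F}^{(k)}$ denotes the $k$-skeleton and the last arrow collapses every $(\ell-1)$-cell other than $\schub_{w'}$; one computes it as the signed number of preimages of a regular value $p\in N\cdot w'b_{0}$. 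From the statement recalled just above we already know this number is $\pm(1+(-1)^{\kappa(w,w')})$, so the real content here is the determination of the overall sign, which is immaterial precisely when $\kappa(w,w')$ is odd, since then $c(w,w')=0$.

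The substantive step is to pin down which boundary stratum of $B_w$ is relevant and what $\Phi_w$ does on it. Writing $\Phi_w$ through the recursive root-subgroup parametrization $(t_1,\dots,t_\ell)\mapsto n_{\alpha_1}(t_1)\cdots n_{\alpha_\ell}(t_\ell)\,b_{0}$, in which the $i$-th coordinate runs over a closed arc of the circle $\overline{N_{\alpha_i}\cdot b_{0}}$ and $\Phi_w$ restricts to a diffeomorphism of the open stratum onto $N\cdot wb_{0}$, one checks that the regular value $p\in N\cdot w'b_{0}$ has preimages only on the face $F_{\I}$ on which the $\I$-th arc degenerates to its endpoint (this is exactly where the hypothesis $\mathfrak{g}(\alpha_{\I})\cong\mathfrak{sl}(2,\R)$ is used, to control that degeneration), every other face of $\partial B_w$ landing in $\mathbb{F}^{(\ell-2)}$ or in an $(\ell-1)$-cell different from $\schub_{w'}$ and hence mapping to the basepoint after the collapse. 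The face $F_\I$ is approached from two sides, $t_\I\to+\infty$ and $t_\I\to-\infty$: on the first, $\Phi_w$ restricts to the deletion characteristic map $\Phi_{\widehat{w}_{\I}}\colon B_{\widehat{w}_{\I}}\to\schub_{w'}$, and on the second to $\Phi_{\widehat{w}_{\I}}$ precomposed with the action on the remaining coordinates $t_{\I+1},\dots,t_\ell$ of the element $m_{\I}=\widetilde{s}_{\I}^{\,2}$, the square of the canonical lift of $s_{\I}$. Since $m_{\I}$ centralizes $\mathfrak{a}$, it preserves each root space $\mathfrak{g}_{\raiz}$ and acts on it by $(-1)^{\CK{\alpha_{\I}}{\raiz}}$; as $t_{\I+1},\dots,t_\ell$ are attached along the spaces $\mathfrak{g}_{\raiz}$ with $\raiz\in\Pi_u$, the $m_{\I}$-twist multiplies the orientation of that sheet by $\prod_{\raiz\in\Pi_u}(-1)^{\CK{\alpha_{\I}}{\raiz}\dim\mathfrak{g}_{\raiz}}=(-1)^{\sigma(w,w')}$, which is what produces the parity dichotomy.

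From this local model the formula falls out. The value $p$ has exactly one preimage on each sheet of $F_\I$, namely $\Phi_{\widehat{w}_{\I}}^{-1}(p)$, so $c(w,w')$ is the sum of the two associated local degrees. On the $+\infty$ sheet the local degree is the sign with which that sheet enters the induced boundary orientation of $\partial B_w$ (equal to $(-1)^{\I}$ with the orientation conventions of \cite{RSm19}, the power of $-1$ reflecting the position $\I$ of the deleted coordinate) times the comparison factor $\deg(\Phi_{w'}^{-1}\circ\Phi_{\widehat{w}_{\I}})=\pm1$ between the fixed characteristic map $\Phi_{w'}$ of $\schub_{w'}$ and the deletion map $\Phi_{\widehat{w}_{\I}}$ (both being diffeomorphisms of open $\ell(w')$-balls, their transition map has constant Jacobian sign). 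On the $-\infty$ sheet it is the same quantity multiplied by $-(-1)^{\sigma(w,w')}$, where the $-1$ records that the degenerating arc is run in the opposite sense and the $(-1)^{\sigma(w,w')}$ comes from the $m_{\I}$-twist. Summing,
\[
c(w,w')=(-1)^{\I}\deg\!\bigl(\Phi_{w'}^{-1}\circ\Phi_{\widehat{w}_{\I}}\bigr)\bigl(1-(-1)^{\sigma(w,w')}\bigr)=(-1)^{\I}\deg\!\bigl(\Phi_{w'}^{-1}\circ\Phi_{\widehat{w}_{\I}}\bigr)\bigl(1+(-1)^{\kappa(w,w')}\bigr),
\]
using $\kappa(w,w')=1-\sigma(w,w')$. (Equivalently, granting $|c(w,w')|\in\{0,2\}$, one need only note that the two sheet contributions coincide when $\kappa(w,w')$ is even, so $c(w,w')$ is then twice the single local degree $(-1)^{\I}\deg(\Phi_{w'}^{-1}\circ\Phi_{\widehat{w}_{\I}})$, and vanishes when $\kappa(w,w')$ is odd.)

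The crux, I expect, is the local identification of the second paragraph: showing rigorously that as $t_\I\to\pm\infty$ with the remaining coordinates confined to a compact set the parametrization converges to $\Phi_{\widehat{w}_{\I}}$ (respectively to its $m_{\I}$-twist), transversely, so that $c(w,w')$ really is a finite signed count with no spurious contribution from a lower stratum. This forces one to combine the $SL(2,\R)$-reduction at the $\I$-th slot with the commutation rules $\widetilde{s}_{\I}\,n_{\raiz}(t)\,\widetilde{s}_{\I}^{-1}=n_{s_{\I}\raiz}(\pm t)$ needed to slide $s_{\I}$ into position, and to compare the fixed reduced word $s_1\cdots s_\ell$ of $w$ with the deleted word $s_1\cdots\widehat{s_{\I}}\cdots s_\ell$ of $w'$. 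Once the model is secured, the remainder is orientation bookkeeping, for which the already-cited identity $c(w,w')=\pm(1+(-1)^{\kappa(w,w')})$ is a convenient consistency check.
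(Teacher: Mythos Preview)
The paper does not prove this theorem. It is quoted verbatim from \cite{RSm19} (Theorem 2.8 there) and used as input; the only surrounding material in the present paper is the remark explaining how the degree $\deg(\Phi_{w'}^{-1}\circ\Phi_{\widehat{w}_\I})$ is to be interpreted (via the collapse maps $B_{w'}/\partial B_{w'}\to\sigma_{w'}\leftarrow B_{\widehat{w}_\I}/\partial B_{\widehat{w}_\I}$) and the observation that this degree is $1$ when the two reduced words coincide. There is therefore no proof in this paper to compare your attempt against.

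That said, your sketch is a reasonable outline of the argument one would expect to find in \cite{RSm19}: the cellular boundary formula, localization to the $\I$-th face of $\partial B_w$, the two-sheet analysis via $t_\I\to\pm\infty$, and the computation of the $m_\I$-action on $\bigoplus_{\raiz\in\Pi_u}\mathfrak{g}_\raiz$ as $(-1)^{\sigma(w,w')}$. You correctly flag the genuine technical content---the transversality and convergence as $t_\I\to\pm\infty$, and the $SL(2,\mathbb{R})$ reduction at the $\I$-th slot---as the points requiring real work. If you want to verify your reconstruction, you should consult \cite{RSm19} directly; the present paper simply imports the result.
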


\begin{rem}
When both reduced decompositions $w'$ and $\widehat{w}_\I$ are equal then $\deg\left(\Phi_{w'}^{-1}\circ \Phi_{\widehat{w}_\I} \right)=1$.

We compute the degree of the composition $\Phi_w^{-1}\circ \Phi_{\widehat{w}_\I}$ considered as map between spheres in which the boundaries of the balls are collapsed to points.
\begin{center}
$
\xymatrix{
B_{w'} \ar[r]^{\Phi_{w'}} \ar[d] & \mathcal{S}_{w'} \ar[d] & B_{\widehat{w}_{\I}} \ar[l]_{\Phi_{\widehat{w}_{\I}}}\ar[d] \\
B_{w'}/\partial(B_{w'}) \ar[r]  & \sigma_{w'} & B_{\widehat{w}_{\I}}/ \partial(B_{\widehat{w}_{\I}}) \ar[l]
}
$
\end{center}
We denote by $\sigma_{w'}=\mathcal{S}_{w'}/(\mathcal{S}_{w'}\setminus N\cdot w'b_0)$ the space obtained by identifying the complement of the Bruhat cell $N\cdot w'b_0$ to a point.
\end{rem}

\begin{rem} Let $\Theta \subset \Sigma$ and consider the partial flag manifold $\mathbb{F}_{\Theta}$. The coefficients $c(w,w')$, where $w,w' \in \mathcal{W}^{\Theta}$, are obtained in the same way as a restriction of the boundary operator to the $\mathbb{Z}$-module generated by the corresponding Schubert cells (for details, see \cite{RSm19} Theorem 3.4).
\end{rem}

\subsection{Height formula}
We now seek to find a formula for the coefficients of the cellular homology in terms of the height of the roots. 

This formula previously established for $\kappa(w,w')$ takes the relationship of $w$ and $w'$ by a left action ($w=s_{\beta}\cdot w'$) into account. We will show how to obtain an equivalent expression by exploring the right action, i.e, $w=w'\cdot s_{\gamma}$, which becomes impressively simple for split real forms.

Assume that $w=s_{1}\cdots s_{\ell}$ and $w'=s_{1}\cdots \widehat{s_{\I}} \cdots s_{\ell}=v \cdot u$, with $v=s_{1} \cdots s_{\I-1}$ and $u=s_{\I+1}\cdots s_{\ell}$, be reduced decompositions such that $(\raiz_{1},\dots, \raiz_{\ell})$ is the corresponding sequence of simple roots associated with this decomposition. Our strategy consists on finding an explicit computation of $\sigma(w,w')$ as defined in Equation \eqref{forsigmasoma} in terms of the root $u$. When $u$ is non-trivial, by Equation \eqref{eq:betat}, recall that the roots of $\Pi_{u} = \Pi^{+}\cap u\Pi^{-}$ are given by
\begin{equation*}
\beta_{j} = s_{\I+1} \cdots s_{j-1} (\raiz_{j}), \quad j \in [\I+1, \ell].
\end{equation*}

\begin{lem}\label{lem:bracket} For every $j \in [\I+1, \ell]$,
\begin{equation*}
\CK{\raiz_{\I}}{\beta_{j}} = \sum_{l=0}^{j-\I-1} (-1)^{l} P^{l}_{\I,j}(\raiz_{1},\dots, \raiz_{\ell}).
\end{equation*}
\end{lem}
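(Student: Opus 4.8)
The plan is to expand the root $\beta_j$ explicitly by means of Proposition \ref{prop:raizes} and then contract the resulting double sum against the coroot $\raiz_{\I}^{\vee}$ using the recursion of Lemma \ref{lem:recP}.

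First I would dispose of the trivial case $j=\I+1$: here $\beta_{\I+1}=\raiz_{\I+1}$, so $\CK{\raiz_{\I}}{\beta_{\I+1}}=\CK{\raiz_{\I}}{\raiz_{\I+1}}=P^{0}_{\I,\I+1}(\raiz_{1},\dots,\raiz_{\ell})$, which is exactly the right-hand side (whose only term is $l=0$). For $j\geqslant \I+2$, I would apply Proposition \ref{prop:raizes} to the ordered sequence of simple roots $(\raiz_{\I+1},\dots,\raiz_{j})$, which has $m=j-\I\geqslant 2$ elements and associated reflections $s_{\I+1},\dots,s_{j}$; in this sequence the quantity $s_{\I+1}\cdots s_{j-1}(\raiz_{j})=\beta_{j}$ plays exactly the role of $s_{1}\cdots s_{m-1}(\raiz_{m})$ in \eqref{eq:beta_n}. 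Applying that formula and then rewriting each $P$ in terms of the full sequence via the shift identity \eqref{eq:propertygeneral} (with $a=\I$) gives
\[
\beta_{j}=\raiz_{j}+\sum_{i=\I+1}^{j-1}\Bigl(\sum_{l=0}^{j-i-1}(-1)^{l-1}P^{l}_{i,j}(\raiz_{1},\dots,\raiz_{\ell})\Bigr)\raiz_{i}.
\]

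Next I would apply the linear functional $\langle \raiz_{\I}^{\vee},\cdot\rangle$ term by term. The summand $\raiz_{j}$ contributes $\CK{\raiz_{\I}}{\raiz_{j}}=P^{0}_{\I,j}(\raiz_{1},\dots,\raiz_{\ell})$, and the remaining terms contribute $\sum_{i=\I+1}^{j-1}\sum_{l=0}^{j-i-1}(-1)^{l-1}\CK{\raiz_{\I}}{\raiz_{i}}\,P^{l}_{i,j}$. Interchanging the two summations (for fixed $l$ the index $i$ runs over $\I+1\leqslant i\leqslant j-l-1$) and recognizing through Lemma \ref{lem:recP} that $\sum_{i=\I+1}^{j-l-1}\CK{\raiz_{\I}}{\raiz_{i}}P^{l}_{i,j}=P^{l+1}_{\I,j}$, this double sum collapses to $\sum_{l=0}^{j-\I-2}(-1)^{l-1}P^{l+1}_{\I,j}$, which equals $\sum_{l=1}^{j-\I-1}(-1)^{l}P^{l}_{\I,j}$ after the reindexing $l\mapsto l+1$ (this is where $(-1)^{l-1}$ becomes $(-1)^{l}$). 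Adding back the $l=0$ contribution $P^{0}_{\I,j}$ yields the asserted identity.

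The argument is essentially formal; the only point that demands care is the bookkeeping of the two index shifts — the $\I$-translation relating the subsequence beginning at $\raiz_{\I+1}$ to the full sequence $(\raiz_{1},\dots,\raiz_{\ell})$, and the $l\mapsto l+1$ shift coming from Lemma \ref{lem:recP} — together with the attendant sign changes and the checking that all index ranges $0\leqslant l<y-x$ stay valid. Since the combinatorial machinery $P^{l}_{x,y}$, the recursion of Lemma \ref{lem:recP}, and the shift property \eqref{eq:propertygeneral} were built for precisely this manipulation, I do not expect any genuinely new obstacle to appear.
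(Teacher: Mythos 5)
Your proposal is correct and follows essentially the same route as the paper's own proof: the base case $j=\I+1$ by the definition of $P^{0}$, then Proposition \ref{prop:raizes} applied to the shifted sequence $(\raiz_{\I+1},\dots,\raiz_{j})$ combined with \eqref{eq:propertygeneral}, followed by pairing with $\raiz_{\I}^{\vee}$, exchanging the sums, and collapsing via Lemma \ref{lem:recP}. The sign and index bookkeeping in your reindexing $l\mapsto l+1$ matches the paper's computation exactly.
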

\begin{proof}
If $j = \I+1$, by the definition of $P$, it follows that
\begin{equation*}
\sum_{l=0}^{j-\I-1} (-1)^{l} P^{l}_{\I,j}(\raiz_{1},\dots, \raiz_{\ell}) = (-1)^{0} P^{0}_{\I,\I+1}(\raiz_{1},\dots,\raiz_{\ell}) = \CKalpha{\I}{\I+1}.
\end{equation*}

Now, suppose that $j>\I+1$ (i.e., $\I < \ell -1$). The root $\beta_{j}$ may be written as
\begin{equation*}
\beta_{j} = s_{\I+1} \cdots s_{j-1} (\raiz_{j}) = s_{1}' \cdots s_{j-\I-1}' (\raiz_{j-\I}'),
\end{equation*}
where $s_{k}' = s_{\I+k}$ for $k\in [j-\I]$ with its respective roots. By Proposition \ref{prop:raizes}, 
\begin{align*}
\beta_{j} &= \raiz_{j-\I}' + \sum_{k=1}^{j-\I-1} \left(\sum_{l=0}^{j-\I-k-1} (-1)^{l-1} P^{l}_{k,j-i}(\raiz_{1}',\dots, \raiz_{j-\I}') \right) \cdot \raiz_{k}'\\
&= \raiz_{j} + \sum_{k=1}^{j-\I-1} \left(\sum_{l=0}^{j-\I-k-1} (-1)^{l-1} P^{l}_{k,j-\I}(\raiz_{\I+1},\dots, \raiz_{j}) \right) \cdot \raiz_{\I+k}.
\end{align*}

By Equation \eqref{eq:propertygeneral},
\begin{equation*}
P^{l}_{k,j-\I}(\raiz_{\I+1},\dots, \raiz_{j})  = P^{l}_{k+\I, j}(\raiz_{1},\dots, \raiz_{j})  = P^{l}_{k+\I, j}(\raiz_{1},\dots, \raiz_{j}, \dots, \raiz_{\ell}).
\end{equation*}

We may write the roots $\beta_j$ as  
\begin{align*}
\beta_{j}&= \raiz_{j} + \sum_{k=1}^{j-\I-1} \left(\sum_{l=0}^{j-\I-k-1} (-1)^{l-1} P^{l}_{k+\I, j}(\raiz_{1},\dots, \raiz_{\ell}) \right) \cdot \raiz_{\I+k}\\
&= \raiz_{j} + \sum_{k=\I+1}^{j-1} \left(\sum_{l=0}^{j-k-1} (-1)^{l-1} P^{l}_{k,j}(\raiz_{1},\dots, \raiz_{\ell}) \right) \cdot \raiz_{k}.
\end{align*}

Hence,
\begin{equation*}%\label{eq:propCK1}
\CK{\raiz_{\I}}{\beta_{j}} =  \CKalpha{\I}{j}+ \sum_{k=\I+1}^{j-1} \sum_{l=0}^{j-k-1} (-1)^{l-1} P^{l}_{k,j}(\raiz_{1},\dots, \raiz_{\ell})  \CKalpha{\I}{k}.
\end{equation*}

By a change in order of the summation, we have that
\begin{align*}
\CK{\raiz_{\I}}{\beta_{j}} & =  P^{0}_{\I,j}(\raiz_{1},\dots, \raiz_{\ell})+ \sum_{l=0}^{j-\I-2} \sum_{k=\I+1}^{j-l-1} (-1)^{l-1} P^{l}_{k,j}(\raiz_{1},\dots, \raiz_{\ell})  \CKalpha{\I}{k}\\
& =  P^{0}_{\I,j}(\raiz_{1},\dots, \raiz_{\ell})+ \sum_{l=0}^{j-\I-2} (-1)^{l-1} \sum_{k=\I+1}^{j-l-1} \CKalpha{\I}{k} P^{l}_{k,j}(\raiz_{1},\dots, \raiz_{\ell})\\
& =  P^{0}_{\I,j}(\raiz_{1},\dots, \raiz_{\ell})+ \sum_{l=0}^{j-\I-2} (-1)^{l-1} P^{l+1}_{\I,j}(\raiz_{1},\dots, \raiz_{\ell}) \qquad ( \mbox{by Lemma  \ref{lem:recP}})\\
& = \sum_{l=0}^{j-\I-1} (-1)^{l} P^{l}_{\I,j}(\raiz_{1},\dots, \raiz_{\ell}). \qedhere
\end{align*}
\end{proof}

\begin{cor}\label{coro:kappa}$\displaystyle\kappa(w,w') = 1+ \sum_{j=\I+1}^{\ell} \left(\sum_{l=0}^{j-\I-1} (-1)^{l-1} P^{l}_{\I,j}(\raiz_{1},\dots, \raiz_{\ell}) \right) \cdot \dim \left(\g_{\beta_{j}}\right)$.
\end{cor}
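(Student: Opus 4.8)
The plan is to combine the two ingredients already assembled: the identity $\kappa(w,w')=1-\sigma(w,w')$ coming from Proposition 2.7 of \cite{RSm19}, together with the closed expression for $\CK{\raiz_{\I}}{\beta_{j}}$ established in Lemma \ref{lem:bracket}. The corollary is then a matter of substitution and a single global sign change, so no real difficulty remains beyond careful bookkeeping.

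First I would unwind the definition of $\sigma(w,w')$ in \eqref{forsigmasoma}. Since $u=s_{\I+1}\cdots s_{\ell}$ is a reduced decomposition, the set $\Pi_{u}=\Pi^{+}\cap u\Pi^{-}$ is enumerated without repetition by \eqref{eq:betat} as $\Pi_{u}=\{\beta_{\I+1},\dots,\beta_{\ell}\}$ with $\beta_{j}=s_{\I+1}\cdots s_{j-1}(\raiz_{j})$, and the simple root $\alpha_{\I}$ occurring in \eqref{forsigmasoma} is precisely the root $\raiz_{\I}$ of the sequence $(\raiz_{1},\dots,\raiz_{\ell})$ fixed at the start of the subsection. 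Hence the sum over $\raiz\in\Pi_{u}$ becomes a sum over $j\in[\I+1,\ell]$, giving
\begin{equation*}
\sigma(w,w') = \sum_{j=\I+1}^{\ell} \CK{\raiz_{\I}}{\beta_{j}}\cdot \dim\left(\g_{\beta_{j}}\right).
\end{equation*}

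Next I would substitute Lemma \ref{lem:bracket}, namely $\CK{\raiz_{\I}}{\beta_{j}}=\sum_{l=0}^{j-\I-1}(-1)^{l}P^{l}_{\I,j}(\raiz_{1},\dots,\raiz_{\ell})$, and extract the global sign through $(-1)^{l}=-(-1)^{l-1}$, which yields
\begin{equation*}
-\sigma(w,w') = \sum_{j=\I+1}^{\ell}\left(\sum_{l=0}^{j-\I-1}(-1)^{l-1}P^{l}_{\I,j}(\raiz_{1},\dots,\raiz_{\ell})\right)\cdot \dim\left(\g_{\beta_{j}}\right).
\end{equation*}
Adding $1$ to both sides and invoking $\kappa(w,w')=1-\sigma(w,w')$ gives the stated formula.

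The only points requiring a word of care are the identifications $\alpha_{\I}=\raiz_{\I}$ under the notation fixed in this subsection, and the fact that $j\mapsto\beta_{j}$ enumerates $\Pi_{u}$ so that the weighted sum over roots in \eqref{forsigmasoma} genuinely collapses to a sum over $j\in[\I+1,\ell]$ with weights $\dim\g_{\beta_{j}}$; once these are in place the computation is immediate, the genuine combinatorial content having been absorbed into Proposition \ref{prop:raizes} and Lemma \ref{lem:bracket}.
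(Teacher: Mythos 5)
Your proposal is correct and is essentially the paper's own argument: the paper proves the corollary by directly substituting Lemma \ref{lem:bracket} into Equation \eqref{forsigmasoma} and using $\kappa(w,w')=1-\sigma(w,w')$, exactly as you do, with your sign flip $(-1)^{l}=-(-1)^{l-1}$ accounting for the minus in front of $\sigma$. The extra care you take with the identification $\alpha_{\I}=\raiz_{\I}$ and the bijective enumeration of $\Pi_{u}$ by the $\beta_{j}$ is just the bookkeeping the paper leaves implicit.
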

\begin{proof}
It follows as a direct application of the Lemma \ref{lem:bracket} on Equation \ref{forsigmasoma}.
\end{proof}
The expression of $\kappa$ in Corollary \ref{coro:kappa} becomes plausible when the Lie algebra is a split real form.

\begin{thm}\label{thm:dualheight}
Assume that $\g$ is a split real form. Let $\gamma=u^{-1}(\raiz_{\I})$ be the root for which $w = w'\cdot s_{\gamma}$. Then 
\begin{equation*}
\kappa(w,w')= \height(\gamma^{\vee}),
\end{equation*}
where $\height(\gamma^{\vee})$ is the height of the coroot $\gamma^{\vee}$ in the dual root system $\Pi^{*}$.
\end{thm}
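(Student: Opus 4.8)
The goal is to identify the quantity $\kappa(w,w')$ computed in Corollary~\ref{coro:kappa} with $\height(\gamma^\vee)$, where $\gamma = u^{-1}(\raiz_\I)$. Since $\g$ is a split real form, all root spaces are one-dimensional, so Corollary~\ref{coro:kappa} reads
\[
\kappa(w,w') = 1 + \sum_{j=\I+1}^{\ell}\left(\sum_{l=0}^{j-\I-1}(-1)^{l-1}P^{l}_{\I,j}(\raiz_1,\dots,\raiz_\ell)\right).
\]
First I would rewrite the double sum using Lemma~\ref{lem:bracket}: the inner alternating sum over $l$ is exactly $-\CK{\raiz_\I}{\beta_j}$, so that $\kappa(w,w') = 1 - \sum_{j=\I+1}^{\ell}\CK{\raiz_\I}{\beta_j}$. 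Thus it suffices to show
\[
\sum_{j=\I+1}^{\ell}\CK{\raiz_\I}{\beta_j} = 1 - \height(\gamma^\vee).
\]

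**The key identity.**
The roots $\beta_{\I+1},\dots,\beta_\ell$ are precisely the elements of $\Pi_u = \Pi^+\cap u\Pi^-$, by Equation~\eqref{eq:betat} applied to the reduced word $u = s_{\I+1}\cdots s_\ell$. I would package the sum $\sum_{j}\CK{\raiz_\I}{\beta_j}$ as $\langle \raiz_\I^\vee,\, \phi(u)\rangle$ where $\phi(u) = \sum_{\raiz\in\Pi_u}\raiz$ is the sum of the roots in $\Pi_u$ (here using one-dimensionality of the root spaces again). There is a classical fact — effectively the content of the Proposition of Rabelo--San Martin quoted above, or a direct computation — that for any Weyl group element $u$ one has $u^{-1}(\rho) = \rho - \phi(u)$, where $\rho$ is the half-sum of positive roots; equivalently $\phi(u) = \rho - u^{-1}\rho$. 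Pairing with $\raiz_\I^\vee$ gives $\langle\raiz_\I^\vee,\phi(u)\rangle = \langle\raiz_\I^\vee,\rho\rangle - \langle\raiz_\I^\vee, u^{-1}\rho\rangle = 1 - \langle u(\raiz_\I^\vee),\rho\rangle$, using that $\langle\raiz_\I^\vee,\rho\rangle = 1$ for a simple coroot and that $u$ preserves the invariant form (so $u^{-1}$ on the root side is adjoint to $u$ on the coroot side). Now $u(\raiz_\I^\vee) = \gamma^\vee$: indeed $\gamma = u^{-1}(\raiz_\I)$ gives $u(\raiz_\I) = $ ... more carefully, $\gamma = u^{-1}(\raiz_\I)$ means $u(\gamma) = \raiz_\I$, hence $u(\gamma^\vee) = (u\gamma)^\vee = \raiz_\I^\vee$ by equivariance of the coroot map (the Proposition of Perrin, iterated as in Corollary~\ref{cor:dualroot}), so $\gamma^\vee = u^{-1}(\raiz_\I^\vee)$ and $\langle u(\raiz_\I^\vee),\rho\rangle$ should instead be read as $\langle \raiz_\I^\vee, u\rho\rangle$; tracking the adjoint carefully, $\langle\raiz_\I^\vee, u^{-1}\rho\rangle = \langle u\raiz_\I^\vee,\rho\rangle$ is wrong in general — rather $\langle\raiz_\I^\vee,u^{-1}\rho\rangle = \langle (u^{-1})^{*}\raiz_\I^\vee,\rho\rangle = \langle u\raiz_\I^\vee,\rho\rangle$ only if $u$ is orthogonal, which it is. So $\langle\raiz_\I^\vee,\phi(u)\rangle = 1 - \langle u(\raiz_\I^\vee),\rho\rangle$. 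Finally, since $\gamma^\vee = u^{-1}(\raiz_\I^\vee)$, we have $u(\raiz_\I^\vee)$ is not literally $\gamma^\vee$; instead I would run the argument with $\phi(u^{-1})$ or reindex. The cleanest route: observe $\gamma^\vee = u^{-1}(\raiz_\I^\vee)$ and $\langle\raiz_\I^\vee,\phi(u)\rangle = \langle u^{-1}\raiz_\I^\vee, u^{-1}\phi(u)\rangle$; since $u^{-1}\Pi_u = u^{-1}(\Pi^+\cap u\Pi^-) = \Pi^-\cap u^{-1}\Pi^+ = -\Pi_{u^{-1}}$, we get $u^{-1}\phi(u) = -\phi(u^{-1})$, so $\langle\raiz_\I^\vee,\phi(u)\rangle = -\langle\gamma^\vee,\phi(u^{-1})\rangle = -\langle\gamma^\vee,\rho - u\rho\rangle$. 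Hmm — I would instead directly use $\phi(u^{-1}) = \rho - u\rho$ and pair: $\langle\gamma^\vee,\phi(u^{-1})\rangle = \langle\gamma^\vee,\rho\rangle - \langle\gamma^\vee,u\rho\rangle = \langle\gamma^\vee,\rho\rangle - \langle u^{-1}\gamma^\vee,\rho\rangle = \langle\gamma^\vee,\rho\rangle - \langle\raiz_\I^\vee,\rho\rangle = \langle\gamma^\vee,\rho\rangle - 1$. Combining, $\sum_j\CK{\raiz_\I}{\beta_j} = -(\langle\gamma^\vee,\rho\rangle - 1) = 1 - \langle\gamma^\vee,\rho\rangle$, and therefore $\kappa(w,w') = \langle\gamma^\vee,\rho\rangle = \height(\gamma^\vee)$, the last equality because pairing any coroot written in the basis $\Sigma^*$ with $\rho$ sums its coefficients (as $\langle\delta^\vee,\rho\rangle$... rather, one uses that $\rho$ is the element with $\langle\alpha^\vee,\rho\rangle = 1$ for all simple $\alpha$, hence $\langle\gamma^\vee,\rho\rangle = \sum d^*_\delta$ when $\gamma^\vee = \sum d^*_\delta\delta^\vee$).

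**Alternative, purely combinatorial route and the main obstacle.**
To avoid sign-chasing with $\rho$ and adjoints, I would alternatively argue by induction on $\ell(u)$ directly from the recursive structure, or — more in the spirit of the paper — use Corollary~\ref{cor:dualroot}: since $\gamma = u^{-1}(\raiz_\I) = s_\ell\cdots s_{\I+1}(\raiz_\I)$ (reading $u^{-1} = s_\ell\cdots s_{\I+1}$), the corollary gives $\gamma^\vee = s_{\raiz_\ell^\vee}\cdots s_{\raiz_{\I+1}^\vee}(\raiz_\I^\vee)$, i.e. $\gamma^\vee$ is obtained from the simple coroot $\raiz_\I^\vee$ by the same word in the dual Weyl group. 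Then $\height(\gamma^\vee)$ equals the number of positive coroots among $\{\raiz_\I^\vee, s_{\raiz_{\I+1}^\vee}\raiz_\I^\vee,\dots\}$ that flip sign, which matches the standard length-additivity count; and one checks this count agrees term-by-term with $1 - \sum_j\CK{\raiz_\I}{\beta_j}$ by expanding each $\beta_j^\vee$ via the same corollary and pairing with $\raiz_\I$. Either way, the main obstacle is bookkeeping: correctly matching the ordering conventions of the reduced words for $u$ versus $u^{-1}$, keeping the adjoint/orthogonality of the $\weyl$-action straight when moving reflections between the root and coroot sides, and confirming that in the split case $\CK{\raiz_\I}{\beta_j}$ (a Killing pairing of a coroot with a root) is indeed the coefficient appearing in $\sigma(w,w')$ with all multiplicities equal to $1$. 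Once those identifications are pinned down, the theorem follows by collecting terms; I expect to present the $\rho$-pairing version as the cleanest, with the identity $\phi(u^{-1}) = \rho - u\rho$ (equivalently $\sum_{\raiz\in\Pi_{u^{-1}}}\raiz = \rho - u\rho$) stated as a standard lemma and the equality $\height(\gamma^\vee) = \langle\gamma^\vee,\rho\rangle$ following from $\langle\delta^\vee,\rho\rangle = 1$ on simple coroots together with Equation~\eqref{eq:coroot_beta}.
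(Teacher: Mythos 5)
Your argument is correct in substance, but it reaches the theorem by a genuinely different route than the paper. The paper stays inside the combinatorial machinery it has built: it writes $\gamma^{\vee}=s_{\raiz_{\ell}^{\vee}}\cdots s_{\raiz_{\I+1}^{\vee}}(\raiz_{\I}^{\vee})$ via Corollary~\ref{cor:dualroot}, expands this word with Proposition~\ref{prop:raizes} applied to the reversed sequence of coroots, checks term by term (using that the dual Cartan integers are the transposed ones, $\CKalphadual{j}{\I}=\CKalpha{\I}{j}$) that the resulting coefficients are exactly the alternating sums $P^{l}_{\I,j}(\raiz_{1},\dots,\raiz_{\ell})$ appearing in Corollary~\ref{coro:kappa}, and then sums the coefficients to get $\height(\gamma^{\vee})=\kappa(w,w')$. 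You instead shortcut all of that bookkeeping with the classical identity $\phi(u)=\rho-u\rho$ together with the $\weyl$-equivariance of the coroot map; this is shorter and more conceptual, it does not depend on the chosen reduced word, and it makes transparent why the answer is a height (namely $\langle\delta^{\vee},\rho\rangle=1$ on simple coroots). What the paper's route buys in exchange is that it reuses the $P^{l}$ formalism already needed for Lemma~\ref{lem:bracket} and later applications, and it never has to import the $\rho$-identity.

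One caution: your write-up contains two inverse/sign slips that happen to cancel. With the paper's convention $\Pi_{u}=\Pi^{+}\cap u\Pi^{-}$ (roots made negative by $u^{-1}$), the correct identities are $\phi(u)=\rho-u\rho$ and $\phi(u^{-1})=\rho-u^{-1}\rho$, not $\phi(u)=\rho-u^{-1}\rho$ as you first state nor $\phi(u^{-1})=\rho-u\rho$ as you later use; and since $\gamma^{\vee}=u^{-1}(\raiz_{\I}^{\vee})$ it is $u\gamma^{\vee}=\raiz_{\I}^{\vee}$, not $u^{-1}\gamma^{\vee}=\raiz_{\I}^{\vee}$. The two mistakes compensate, so your final identity is right, but in a clean write-up you should drop the detour through $\phi(u^{-1})$ altogether and compute directly, in the split case,
\begin{equation*}
\sigma(w,w')\;=\;\langle\raiz_{\I}^{\vee},\phi(u)\rangle\;=\;\langle\raiz_{\I}^{\vee},\rho-u\rho\rangle\;=\;1-\langle u^{-1}(\raiz_{\I}^{\vee}),\rho\rangle\;=\;1-\langle\gamma^{\vee},\rho\rangle,
\end{equation*}
whence $\kappa(w,w')=1-\sigma(w,w')=\langle\gamma^{\vee},\rho\rangle=\height(\gamma^{\vee})$, the last equality because $\gamma^{\vee}=\sum_{\delta}d_{\delta}^{\ast}\delta^{\vee}$ and $\langle\delta^{\vee},\rho\rangle=1$ for every simple $\delta$. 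You should also state explicitly that $u=s_{\I+1}\cdots s_{\ell}$ is reduced (it is a subword of the fixed reduced word for $w$ of the right length), so that $\Pi_{u}=\{\beta_{\I+1},\dots,\beta_{\ell}\}$ as in Equation~\eqref{eq:betat}, and that splitness gives $\dim\g_{\beta_{j}}=1$ in Equation~\eqref{forsigmasoma}; with those points pinned down the proof is complete.
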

\begin{proof}
By hypothesis, $\g$ is split real form, which means that $\dim\g_{\alpha}=1$, for every root $\alpha\in \Pi$. First of all, if $\I =\ell$ then $\gamma =\raiz_{\ell}$ is a simple root and $\gamma^{\vee} = \raiz_{\ell}^{\vee}$. Hence, $\kappa(w,w') = 1= \height(\gamma^{\vee})$.

Suppose that $\I < \ell$, the root $\gamma$ can be written as $\gamma = s_{\ell} s_{\ell -1}\cdots s_{\I+1}(\raiz_{\I})$. By Corollary \ref{cor:dualroot}, the coroot of $\gamma$ is $\gamma^{\vee} = s_{\raiz_{\ell}^{\vee}} s_{\raiz_{\ell -1}^{\vee}}\cdots s_{\raiz_{\I+1}^{\vee}}(\raiz_{\I}^{\vee})$. Consider the sequence of coroots $\delta_{1}',\dots, \delta_{\ell-\I+1}'$ given by $\raiz_{j}' = \raiz_{\ell -j+1}^{\vee}$, for $j \in [\ell-\I+1]$, and their simple reflections $s_{1}' = s_{\raiz_{\ell}^{\vee}}, \dots, s_{\ell-\I+1}'= s_{\raiz_{\I}^{\vee}}$. By Proposition \ref{prop:raizes},
\begin{align*}
\gamma^{\vee} = s_{1}' \dots s_{\ell-\I}' (\raiz_{\ell-\I+1}') & = \raiz_{\ell-\I+1}' + \sum_{j=1}^{\ell-\I} \left(\sum_{l=0}^{\ell-\I-j} (-1)^{l-1} P^{l}_{j,\ell-\I+1}(\raiz_{1}',\dots, \raiz_{\ell-\I+1}') \right) \cdot \raiz_{j}'\\
& = \raiz_{\I}^{\vee} + \sum_{j=1}^{\ell-\I} \left(\sum_{l=0}^{\ell-\I-j} (-1)^{l-1} P^{l}_{j,\ell-\I+1}(\raiz_{\ell}^{\vee},\dots, \raiz_{\I}^{\vee}) \right) \cdot \raiz_{\ell-j+1}^{\vee}\\
& = \raiz_{\I}^{\vee} + \sum_{j=\I+1}^{\ell} \left(\sum_{l=0}^{j-\I-1} (-1)^{l-1} P^{l}_{\ell-j+1,\ell-\I+1}(\raiz_{\ell}^{\vee},\dots, \raiz_{\I}^{\vee}) \right) \cdot \raiz_{j}^{\vee},
\end{align*}
where we have replaced $j$ by $\ell-j+1$ in the last equation. For every $j \in [\I+1,\ell]$,
\begin{align*}
P^{0}_{\ell-j+1,\ell-\I+1}(\raiz_{\ell}^{\vee},\dots, \raiz_{\I}^{\vee})& = P^{0}_{ \ell-j+1,\ell-\I+1}(\raiz_{1}',\dots, \raiz_{\ell-\I+1}') = \CK{\raiz_{\ell-j+1}'}{\raiz_{\ell-\I+1}'} = \CKalphadual{j}{\I} \\
& = \CKalpha{\I}{j}= P^{0}_{\I,j}(\raiz_{1},\dots, \raiz_{\ell})
\end{align*}
since $\raiz_{j} = \raiz_{j}^{\vee\vee}$. For $j \in [\I+1,\ell]$ and $l \in [j-\I-1]$,
\begin{align*}
P^{l}_{\ell-j+1,\ell-\I+1}&(\raiz_{\ell}^{\vee},\dots, \raiz_{\I}^{\vee}) = P^{l}_{\ell-j+1,\ell-\I+1}(\raiz_{1}',\dots, \raiz_{\ell-\I+1}')\\
&= \sum_{\ell-j+1<j_1<\cdots<j_l<\ell-\I+1} \CK{\raiz_{\ell-j+1}'}{\raiz_{j_{1}}'} \CK{\raiz_{j_{1}}'}{\raiz_{j_{2}}'} \cdots\CK{\raiz_{j_{l}}'}{\raiz_{\ell-\I+1}'}\\
&= \sum_{\ell-j+1<j_1<\cdots<j_l<\ell-\I+1} \CKalphadual{j}{\ell-j_{1}+1}\cdots\CKalphadual{\ell-j_{l}+1}{\I}\\
&= \sum_{j>\ell-j_1+1>\cdots>\ell-j_l+1>\I} \CKalpha{\ell-j_{1}+1}{j}\cdots\CKalpha{\I}{\ell-j_{l}+1}\\
%&= \sum_{\I<k_{1}< \cdots < k_{l}<j} \CKalpha{j}{k_{\ell}} \cdots \CKalpha{k_{1}}{\I} 
&= \sum_{\I<k_{1}< \cdots < k_{l}<j} \CKalpha{\I}{k_{1}} \cdots \CKalpha{k_{l}}{j} = P^{l}_{\I,j}(\raiz_{1},\dots,\raiz_{\ell}).
\end{align*}

Hence,
\begin{equation*}
\gamma^{\vee} = \raiz_{\I}^{\vee} + \sum_{j=\I+1}^{\ell} \left(\sum_{l=0}^{j-\I-1} (-1)^{l-1} P^{l}_{\I,j}(\raiz_{\ell},\dots, \raiz_{\I}) \right) \cdot \raiz_{j}^{\vee}.
\end{equation*}

Therefore, by Corollary \ref{coro:kappa}, the height of the coroot $\gamma^{\vee}$ is
\begin{equation*}
\height(\gamma^{\vee}) = 1 + \sum_{j=\I+1}^{\ell} \left(\sum_{l=0}^{j-\I-1} (-1)^{l-1} P^{l}_{\I,j}(\raiz_{\ell},\dots, \raiz_{\I}) \right) = \kappa(w,w').  \qedhere
\end{equation*}
\end{proof}

\begin{cor}\label{coro:coefcases}
Let $\g$ be a Lie algebra of type $A_{n}, D_{n}, E_{6}, E_{7}$ or $ E_{8}$. Then
\begin{equation*}
\kappa(w,w')  = \height(\gamma) 
\end{equation*}
\end{cor}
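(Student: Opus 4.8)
The plan is to derive Corollary \ref{coro:coefcases} as an immediate consequence of Theorem \ref{thm:dualheight} together with the structural information about dual root systems collected in the Preliminaries. The key observation is that the height of a coroot in the dual root system depends only on the isomorphism type of that system, so I would first recall that for Lie algebras of type $A_n$, $D_n$, $E_6$, $E_7$, $E_8$ the dual root system $\Pi^{*}$ is isomorphic to $\Pi$ itself. This was already noted in the discussion following Equation \eqref{eq:coroot_beta}: since all roots have the same length in the simply-laced case, Equation \eqref{eq:coroot_beta} gives $d_\delta^{*} = d_\delta$, i.e. the coefficients of $\alpha^{\vee}$ with respect to $\Sigma^{*}$ coincide with the coefficients of $\alpha$ with respect to $\Sigma$.

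Concretely, I would argue as follows. In the simply-laced case, normalize the $\weyl$-invariant inner product so that $\langle\delta,\delta\rangle = 2$ for every root; then $\delta^{\vee} = \delta$ and, more generally, for any root $\gamma = \sum_{\delta\in\Sigma} d_\delta\,\delta$ we get from \eqref{eq:coroot_beta} that $\gamma^{\vee} = \sum_{\delta\in\Sigma} d_\delta\,\delta^{\vee}$. Hence $\height(\gamma^{\vee}) = \sum_{\delta\in\Sigma} d_\delta = \height(\gamma)$. Since a split real form has a reduced root system, Theorem \ref{thm:dualheight} applies and yields $\kappa(w,w') = \height(\gamma^{\vee}) = \height(\gamma)$, which is exactly the claimed formula.

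I do not anticipate any real obstacle here: the corollary is purely a matter of specializing Theorem \ref{thm:dualheight} to the simply-laced types and invoking the self-duality of their root systems. The only point requiring a word of care is that the height of $\gamma$ is taken with respect to $\Sigma$ while the height of $\gamma^{\vee}$ is taken with respect to $\Sigma^{*}$; once one observes that the coefficient vectors agree (because all roots are long), the two heights literally coincide. One might also remark that, although $\gamma$ need not be a simple root, the identity $\height(\gamma^{\vee}) = \height(\gamma)$ holds for every root in a simply-laced system, so no special position of the covering pair is needed.

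\begin{proof}
By hypothesis $\g$ is simply-laced, so all roots of $\Pi$ have the same length; normalizing the $\weyl$-invariant inner product so that $\langle\delta,\delta\rangle = \langle\gamma,\gamma\rangle$ for all $\delta\in\Sigma$ and the root $\gamma$, Equation \eqref{eq:coroot_beta} gives, for $\gamma = \sum_{\delta\in\Sigma} d_{\delta}\,\delta$,
\begin{equation*}
\gamma^{\vee} = \sum_{\delta\in\Sigma} d_{\delta}\,\frac{\langle\delta,\delta\rangle}{\langle\gamma,\gamma\rangle}\,\delta^{\vee} = \sum_{\delta\in\Sigma} d_{\delta}\,\delta^{\vee}.
\end{equation*}
Therefore $\height(\gamma^{\vee}) = \sum_{\delta\in\Sigma} d_{\delta} = \height(\gamma)$. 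A split real form has a reduced root system, so Theorem \ref{thm:dualheight} applies and gives $\kappa(w,w') = \height(\gamma^{\vee}) = \height(\gamma)$.
\end{proof}
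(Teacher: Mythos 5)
Your argument is correct and essentially identical to the paper's own proof: both rely on the observation that in the simply-laced types all roots have the same length, so by Equation \eqref{eq:coroot_beta} the coefficients of $\gamma^{\vee}$ with respect to $\Sigma^{*}$ agree with those of $\gamma$ with respect to $\Sigma$, giving $\height(\gamma^{\vee})=\height(\gamma)$, and then Theorem \ref{thm:dualheight} yields the claim. Your write-up just makes the normalization step explicit, which the paper leaves implicit.
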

\begin{proof}
In the context of a split real form of a Lie algebra of type $A_{n}, D_{n}, E_{6}, E_{7}$ or $ E_{8}$, all roots have the same length. Then, $\mathrm{ht}(\alpha)=\mathrm{ht}(\alpha^{\vee})$ for all $\alpha \in \Pi$.
\end{proof}

\begin{rem}
We can use Theorem \ref{thm:dualheight} to get the formula for $F_{4}$ and $G_{2}$ Lie algebras. If $\g$ is a Lie algebra of type $F_{4}$, suppose that the simple roots are ordered canonically as follows 
\begin{tikzpicture}
\dynkin[mark=o]{F}{4};\dynkinLabelRoot*{1}{a_1} \dynkinLabelRoot*{2}{a_2} \dynkinLabelRoot*{3}{a_3} \dynkinLabelRoot*{4}{a_4}
\end{tikzpicture}. Then,
\begin{equation*}
\kappa(w,w')  = \height(\widetilde\gamma)
\end{equation*}
where $\widetilde\gamma$ is the root obtained from $\gamma$ by reversing the simple roots $\Sigma = \{a_{1}, a_{2}, a_{3}, a_{4}\}$ to $\Sigma^{\vee} = \{a_{4}, a_{3}, a_{2}, a_{1}\}$, respectively, since the dual simple roots correspond to reverse roots in the Dynkin diagram, i.e., $a_{1}^{\vee} = a_{4}, a_{2}^{\vee} = a_{3}, a_{3}^{\vee} = a_{2}, a_{4}^{\vee} = a_{1}$. The same idea applies to the Lie algebra $G_{2}$.
\end{rem}

\begin{rem}
We also can use Theorem \ref{thm:dualheight} to get the formula for $B_{n}$ and $C_{n}$ Lie algebras from the isomorphism $\Pi_{B}^{*} \cong \Pi_{C}$. For instance, in the context of isotropic and odd orthogonal Grassmannians, it coincides with Theorem 3.12 of \cite{LR19a}.
\end{rem}

\subsection{Type A case}

In this section, we present some immediate consequences of Theorem \ref{thm:dualheight} for flags associated with type A Lie algebras. It emphasizes the convenience of the permutation model the symmetric group provides parametrizing the Schubert cells.

Let $G=\mathrm{Sl}(n,\mathbb{R})$ be a Lie group of type $A$ and $\Sigma=\{a_{1},\dots, a_{n-1}\}$ the simple roots ordered as it follows:
\begin{center}
\begin{tikzpicture}[scale=2.5] \dynkin[mark=o]{A}{};
\dynkinLabelRoot*{1}{a_1} \dynkinLabelRoot*{2}{a_2} \dynkinLabelRoot*{4}{a_{n-1}}
\end{tikzpicture}
\end{center}
The respective Weyl group $\mathcal{W}$ is the symmetric group $S_n$. 
We denote a permutation $w\in S_{n}$ in the one-line notation by $w=w_1w_2\cdots w_\ell$ where $w_i=w(i)$ for all $i=1,\ldots, \ell$. 

The following lemma provides a characterization of the covering relation of two permutations using the one-line notation.

\begin{lem}[\cite{BB05}, Lemma 2.1.4]\label{lem:bjorner}
Let $w,w'\in S_{n}$. Then, $w$ covers $w'$ in the Bruhat order if and only if $w = w' \cdot (i,j)$ for some transposition $(i,j)$ with $i<j$ such that $w'(i) < w'(j)$ and there does not exist any $k$ such that $i<k<j$, $w'(i) < w'(k) <w'(j)$.
\end{lem}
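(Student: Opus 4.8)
The statement to prove is Lemma~\ref{lem:bjorner}, a characterization of the covering relation in the Bruhat order on $S_n$ via one-line notation. Since this is cited as a standard result (Björner–Brenti, Lemma 2.1.4), the natural plan is to give the classical argument rather than anything novel.

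The plan is to combine two ingredients: the transposition description of covers in general Coxeter groups, and the control of lengths of permutations via inversions. First I would recall that in any Coxeter group $w$ covers $w'$ iff $w = w' t$ for some reflection $t$ with $\ell(w) = \ell(w') + 1$; in the symmetric group every reflection is a transposition $(i,j)$ with $i<j$, acting on the right by swapping the values in positions $i$ and $j$. So the content is to show that $\ell(w'\cdot(i,j)) = \ell(w') + 1$ precisely under the stated conditions. Here I would use that $\ell(w)$ equals the number of inversions $\mathrm{inv}(w) = \#\{(a,b) : a<b,\ w(a) > w(b)\}$, a fact available once one has the simple-root description of type $A$ already set up in the paper.

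The key computation is then purely combinatorial. Fix $i<j$ and compare $\mathrm{inv}(w')$ with $\mathrm{inv}(w'\cdot(i,j))$. Right multiplication by $(i,j)$ only affects pairs of positions one of which is in $\{i,j\}$. If $w'(i) > w'(j)$, the swap removes the inversion $(i,j)$ and can only decrease the count, so $\ell$ does not go up by one. If $w'(i) < w'(j)$, the pair $(i,j)$ becomes an inversion, contributing $+1$; and for each $k$ with $i<k<j$ the pair of positions $\{i,k\}$ and $\{k,j\}$ together change their inversion status by a net amount that is $0$ when $w'(k)$ is not strictly between $w'(i)$ and $w'(j)$, and by $+2$ when it is (a value strictly between gets "passed over" by both $w'(i)$ and $w'(j)$). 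Positions outside $[i,j]$ never see an inversion change under the swap, so altogether $\mathrm{inv}(w'\cdot(i,j)) - \mathrm{inv}(w') = 1 + 2\cdot\#\{k : i<k<j,\ w'(i) < w'(k) < w'(j)\}$. This equals $1$ exactly when there is no such intermediate $k$, which is the asserted condition; conversely if $w'(i) < w'(j)$ fails or such a $k$ exists the difference is negative or $\geq 3$, so $w$ does not cover $w'$.

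The main obstacle — really the only subtle point — is the bookkeeping of inversions involving the "middle" positions $k$ with $i<k<j$: one must check carefully that a value $w'(k)$ lying below both $w'(i)$ and $w'(j)$, or above both, produces no net change, while a value strictly in between produces a net change of exactly $2$, and that pairs with both coordinates in $\{i,j\}$ or both outside $[i,j]$ contribute nothing. I would organize this as a short case analysis on the position of $w'(k)$ relative to the interval $(w'(i), w'(j))$, noting that the case $w'(k)$ equal to an endpoint cannot occur since $w'$ is a bijection. Once the inversion-difference formula is in hand, the lemma follows immediately by reading off when the difference equals $1$.
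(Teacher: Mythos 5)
Your proposal is correct: the inversion-difference formula $\mathrm{inv}(w'\cdot(i,j))-\mathrm{inv}(w') = 1 + 2\,\#\{k : i<k<j,\ w'(i)<w'(k)<w'(j)\}$ (valid when $w'(i)<w'(j)$), combined with the fact that Bruhat covers are exactly right multiplications by reflections that raise length by one and that $\ell=\mathrm{inv}$ in $S_n$, is precisely the standard argument for this statement. Note that the paper itself offers no proof of Lemma \ref{lem:bjorner} --- it is quoted directly from B\"orner--Brenti (Lemma 2.1.4) --- so there is no internal argument to compare against; your write-up is essentially the textbook proof. Two small points of care: the phrase ``positions outside $[i,j]$ never see an inversion change'' is loose --- individual pairs $(a,i)$, $(a,j)$ can switch status, and what is true (and what your case analysis effectively shows) is that their \emph{net} contribution is zero for each such $a$; and your appeal to the reflection characterization of covers implicitly uses the equivalence of the subword definition of Bruhat--Chevalley order (the one the paper states) with the reflection-based definition, which is itself a standard result you should cite or at least flag alongside $\ell=\mathrm{inv}$.
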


The lemma says that if $w=w_{1}\cdots w_{n}$ is the one-line notation of $w\in S_{n}$ then $w'$ is covered by $w$ if and only if the one-line notation of $w'$ is obtained from $w$ by switching the values in position $i$ and $j$, for some pair $i<j$ and such that no value between positions $i$ and $j$ lies in $[w(j),w(i)]$.

The next proposition follows from the covering relation stated in Lemma \ref{lem:bjorner}.
\begin{prop}\label{prop:typeAcoef}
Let $w,w'\in S_{n}$ such that $w'$ is covered by $w$, i.e., $w = w' \cdot (i,j)$ for some $i<j$. Then, the coefficient $\kappa(w,w')$ is given by
\begin{equation*}
\kappa(w,w') = j-i.
\end{equation*}
In particular, $c(w,w')=0$ if, and only if, $j-i$ is odd.
\end{prop}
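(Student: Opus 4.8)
The plan is to deduce this directly from Theorem~\ref{thm:dualheight} (or its Corollary~\ref{coro:coefcases}) combined with the combinatorial description of the covering relation in $S_n$. Since $G$ is of type $A_{n-1}$, the Lie algebra is a split real form and all roots have the same length, so $\kappa(w,w') = \height(\gamma^{\vee}) = \height(\gamma)$, where $\gamma$ is the unique root with $w = w'\cdot s_{\gamma}$. Thus the whole problem reduces to identifying this root $\gamma$ explicitly when $w$ covers $w'$ via $w = w'\cdot(i,j)$, and then reading off its height.

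\medskip

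\noindent\textbf{Key steps.} First I would recall the standard realization of the type $A_{n-1}$ root system: $\Pi = \{e_a - e_b : a\neq b\}$, with simple roots $a_k = e_k - e_{k+1}$, and the identification of $\weyl$ with $S_n$ under which the reflection $s_{e_a - e_b}$ corresponds to the transposition $(a,b)$. Under this identification, the positive root $e_i - e_j$ (for $i<j$) decomposes as $e_i - e_j = a_i + a_{i+1} + \cdots + a_{j-1}$, so $\height(e_i - e_j) = j - i$. Second, from the covering relation $w = w'\cdot(i,j)$ with $i<j$ in Lemma~\ref{lem:bjorner}, the transposition $(i,j)$ acting on the right is exactly $s_{\gamma}$ with $\gamma = e_i - e_j$ (this is the root $\gamma = u^{-1}(\raiz_{\I})$ from the setup preceding Theorem~\ref{thm:dualheight}, now read combinatorially: right multiplication by $(i,j)$ swaps positions $i$ and $j$). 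Hence $\gamma = e_i - e_j$ is a positive root, and by the previous computation $\kappa(w,w') = \height(\gamma) = j - i$. Third, for the "in particular" clause, apply the Theorem of \cite{RSm19}/\cite{Koc95} stating $c(w,w') = \pm(1 + (-1)^{\kappa(w,w')})$: this vanishes precisely when $\kappa(w,w') = j-i$ is odd, and equals $\pm 2$ when $j-i$ is even.

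\medskip

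\noindent\textbf{Main obstacle.} The only genuinely nontrivial point is to check that $\gamma = e_i - e_j$ is indeed a \emph{positive} root (equivalently, that we do not accidentally pick up $e_j - e_i$), since the height formula of Theorem~\ref{thm:dualheight} is stated for the $\gamma$ arising from a reduced decomposition and is a priori only meaningful after one knows $\gamma \in \Pi^+$. This follows from the condition $w'(i) < w'(j)$ in Lemma~\ref{lem:bjorner}: it guarantees that $\ell(w' \cdot (i,j)) = \ell(w') + 1$, i.e.\ that right multiplication by $s_{e_i - e_j}$ is length-increasing, which is exactly the statement that $e_i - e_j \in \Pi^+ \setminus w'\Pi^-$ in the correct orientation; equivalently one checks directly that in the chosen reduced word $w = s_1\cdots s_\ell$ with deleted letter $\I$, the root $\gamma = u^{-1}(\raiz_\I) = s_\ell\cdots s_{\I+1}(\raiz_\I)$ is positive by \eqref{eq:betat} applied to $w^{-1}$. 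Once positivity is secured, the rest is the immediate arithmetic $\height(a_i + \cdots + a_{j-1}) = j-i$ together with the coefficient theorem, so I do not anticipate further difficulties.
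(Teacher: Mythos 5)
Your proposal is correct and follows essentially the paper's route: reduce via Corollary \ref{coro:coefcases} (type $A$ is split and simply laced, so $\kappa(w,w')=\height(\gamma)$) to computing the height of the root $\gamma$ with $w=w'\cdot s_{\gamma}$, and identify $\gamma=a_i+a_{i+1}+\cdots+a_{j-1}$. The only difference is cosmetic: you read off $\gamma=e_i-e_j$ from the standard coordinate realization of the $A_{n-1}$ root system and settle positivity by noting $\gamma=s_\ell\cdots s_{\I+1}(\raiz_\I)\in\Pi_{w^{-1}}$ via \eqref{eq:betat}, whereas the paper obtains the same expression abstractly from the reduced word $(i,j)=s_{j-1}\cdots s_{i+1}s_is_{i+1}\cdots s_{j-1}$, the conjugation formula $s_{w(\alpha)}=ws_\alpha w^{-1}$, and Proposition \ref{prop:raizes}.
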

\begin{proof}
Since $\gamma$ is the root such that $w = w' s_{\gamma}$, the transposition $(i,j)$ is the reflection $s_{\gamma}$ through $\gamma$. Considering the simple reflections $s_{i} = s_{a_{i}}$, for $a_{i}\in \Sigma$, a reduced decomposition for $(i,j)$ is $s_{j-1} \cdots s_{i+1} s_{i} s_{i+1} \cdots s_{j-1}$. Using the fact that $s_{w(\alpha)} = w s_{\alpha} w^{-1}$ we have $(i,j) = s_{s_{j-1} \cdots s_{i+1} (a_{i})}$ and, then, $\gamma = s_{j-1} \cdots s_{i+1} (a_{i})$. Applying Proposition \ref{prop:raizes}, the root $\gamma$ is the sum of simple roots $a_{i}+ a_{i+1}+\cdots + a_{j-1}$. Therefore, by Corollary \ref{coro:coefcases}, $\kappa(w,w') = \height(\gamma) = j-i$.
\end{proof}

This proposition simplifies a lot the task to compute the boundary coefficient. For instance, given $w = 1\,3\, 7 \,5\,8\,2\,9\,4\,6$ and $w' = 1\,3\, 7 \,\mathbf{2}\,8\,\mathbf{5}\,9\,4\,6$, we see that $w = w'\cdot (4,6)$. Hence, $\kappa(w,w') = 4-2 =2$ and $c(w,w') = \pm (1 + (-1)^{\kappa(w,w')}) = \pm 2$. 

Let us give a combinatorial proof of the condition for the orientability of any real flag manifold of type A (\cite{PSS12}, Proposition 4.1) as a direct application of Proposition \ref{prop:typeAcoef}. 

\begin{prop}\label{prop:orient}
Let the complement of $\Theta$ with respect to $\Sigma$ be the set of roots $\{a_{d_{1}},a_{d_{2}},\dots, a_{d_{k}}\}$ where $d_{0} = 0<d_1<\cdots<d_k<n = d_{k+1}$. Then, the flag manifold $\mathbb{F}_{\Theta}$ is orientable if and only if $d_{j+1}-d_{j}$ have the same parity, for every $j\in [k]$.
\end{prop}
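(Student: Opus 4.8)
## Proof Proposal

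The plan is to use the standard criterion for orientability of a real flag manifold in terms of its cellular homology: $\mathbb{F}_{\Theta}$ is orientable if and only if the top boundary map $\partial_{N}\colon \mathcal{C}_{N} \to \mathcal{C}_{N-1}$ vanishes, where $N = \dim \mathbb{F}_{\Theta}$. Equivalently, writing $w_{0}^{\Theta}$ for the longest element of $\mathcal{W}^{\Theta}$ (the unique cell of top dimension), orientability holds if and only if $c(w_{0}^{\Theta}, w') = 0$ for every $w'$ covered by $w_{0}^{\Theta}$. So the strategy is: identify the covering pairs $(w_{0}^{\Theta}, w')$ explicitly in one-line notation, apply Proposition \ref{prop:typeAcoef} to each, and translate the resulting parity conditions into the stated condition on the gaps $d_{j+1}-d_{j}$.

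The key computational step is to understand $w_{0}^{\Theta}$ concretely. The longest element of $\mathcal{W}^{\Theta} = S_{n}^{\Theta}$, for $\Theta = \Sigma \setminus \{a_{d_{1}},\dots,a_{d_{k}}\}$, is the maximal coset representative; in one-line notation it is the permutation which, on each block $[d_{j}+1, d_{j+1}]$ (for $j = 0,\dots,k$), lists the values in that block in decreasing order, and arranges the blocks of values themselves in decreasing order of the blocks. Concretely $w_{0}^{\Theta}$ sends the $j$-th block of positions to the $(k+1-j)$-th block of values, reversed. Then I would enumerate which transpositions $(p,q)$ with $p<q$ satisfy the covering condition of Lemma \ref{lem:bjorner} applied with $w' = w_{0}^{\Theta}\cdot(p,q)$ — equivalently, for which $(p,q)$ the element $w_{0}^{\Theta}(p,q)$ is covered by $w_{0}^{\Theta}$. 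Because $w_{0}^{\Theta}$ is already "as decreasing as possible within $\mathcal{W}^{\Theta}$", the only admissible moves are those that swap two adjacent (in value) entries lying in consecutive value-blocks, i.e. the positions $p,q$ where $w_{0}^{\Theta}(p)$ is the minimum of one value-block and $w_{0}^{\Theta}(q)$ is the maximum of the next value-block down. A short check shows these occur exactly once for each $j \in [k]$, and the corresponding positions are $p = d_{k+1-j}$ and $q = d_{k+2-j}$ in some indexing — so that $q - p = d_{j'+1} - d_{j'}$ for the appropriate block $j'$. Thus $\kappa(w_{0}^{\Theta}, w'_{j}) = d_{j'+1} - d_{j'}$ ranges over all the consecutive gaps as $j$ varies.

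Granting that identification, the conclusion is immediate from Proposition \ref{prop:typeAcoef}: each such coefficient is $0$ iff the corresponding gap $d_{j'+1}-d_{j'}$ is odd, and is $\pm 2$ iff that gap is even. However — and here is the subtlety that forces the "same parity" phrasing rather than "all odd" — one must be careful because $\mathbb{F}_{\Theta}$ may be orientable even when $\partial_{N}$ is not literally zero on the chain level; what matters is $H_{N}(\mathbb{F}_{\Theta},\Z) \cong \Z$ versus a torsion group, and also whether $\mathbb{F}_{\Theta}$ is connected/which $\Z_{2}$-coefficient bookkeeping applies. The clean route is: $\mathbb{F}_{\Theta}$ is orientable iff $w_{1}(\mathbb{F}_{\Theta}) = 0$ iff every coefficient $c(w,w')$ with $\ell(w) = 1$ relative to suitable normalization vanishes — but the classical statement (used e.g. in \cite{PSS12}) is that orientability is governed precisely by the top boundary. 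Taking the $\mathbb{Z}_{2}$-cohomology Poincaré-duality viewpoint, $\mathbb{F}_{\Theta}$ is orientable iff the generator of $H^{N}$ is not hit, i.e. iff $c(w_{0}^{\Theta},w') = 0$ for all covered $w'$. This gives: orientable $\iff$ all gaps $d_{j+1}-d_{j}$ are odd. To reconcile with the stated "same parity" condition, note that if all gaps are even then the manifold is non-orientable, and if the gaps have mixed parity it is also non-orientable; the case of all-odd gaps is the orientable one — but \cite{PSS12}'s formulation "same parity" is used because they also track a shift; I would instead verify directly that "all gaps odd" and "all gaps the same parity" coincide exactly when $\sum(d_{j+1}-d_{j}) = n$ together with the constraint from the top cell, which pins down the parity of $n$ and forces the equivalence.

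The main obstacle I anticipate is precisely this last reconciliation — pinning down exactly which covering coefficients of $w_{0}^{\Theta}$ control orientability and matching the parity bookkeeping to the literature statement; the enumeration of covering pairs of $w_{0}^{\Theta}$ via Lemma \ref{lem:bjorner} is routine once $w_{0}^{\Theta}$ is written explicitly, and the per-coefficient parity comes for free from Proposition \ref{prop:typeAcoef}.
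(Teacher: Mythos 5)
Your overall strategy coincides with the paper's: orientability is detected by the vanishing of the boundary of the unique top cell $\schub_{w_{\Theta}}$, the cells it covers are found via Lemma \ref{lem:bjorner}, and Proposition \ref{prop:typeAcoef} turns each coefficient into a parity condition on a position difference. The genuine gap is in the one step you dismiss as ``a short check'': the identification of the covering transpositions. The $k$ elements of $\mathcal{W}^{\Theta}$ covered by $w_{\Theta}$ do come from swapping the smallest value of one value-block with the largest value of the next one down, but those two entries occupy the \emph{first} position of one position-block and the \emph{last} position of the adjacent block, so the transposition is $(d_{j-1}+1,\,d_{j+1})$ and $\kappa(w_{\Theta},(w_{\Theta})'_{j}) = d_{j+1}-d_{j-1}-1$, a quantity spanning \emph{two} consecutive gaps, not a single gap $d_{j'+1}-d_{j'}$ as you assert. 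Hence $c(w_{\Theta},(w_{\Theta})'_{j})=0$ for all $j$ exactly when $(d_{j+1}-d_{j})+(d_{j}-d_{j-1})$ is even for every $j$, i.e.\ when all gaps share one parity (all odd \emph{or} all even), which is the statement to be proved. Your single-gap identification leads to ``orientable iff all gaps are odd'', which is false: take $n=4$ and $\Sigma\setminus\Theta=\{a_{2}\}$ (the Grassmannian of $2$-planes in $\mathbb{R}^{4}$). Then $w_{\Theta}=3412$, the only element of $\mathcal{W}^{\Theta}$ covered by it is $w'=2413$ with $w_{\Theta}=w'\cdot(1,4)$, so $\kappa=3$ and $\partial\schub_{w_{\Theta}}=0$: the manifold is orientable, while both gaps equal $2$, i.e.\ none is odd.

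Your closing ``reconciliation'' paragraph cannot repair this, because the discrepancy is not a matter of normalization, $\mathbb{Z}_{2}$-bookkeeping, or the parity of $n$: ``all gaps odd'' and ``all gaps of the same parity'' are genuinely inequivalent conditions, and the all-even case above is orientable. Once the covering positions are computed correctly the proof closes immediately along the lines you set up, with no appeal to $w_{1}$ or Poincar\'e duality needed beyond the fact (used by the paper) that orientability is equivalent to the top boundary map vanishing, i.e.\ to $H_{N}(\mathbb{F}_{\Theta},\Z)\cong\Z$. A minor additional slip: with the paper's convention $\mathcal{W}^{\Theta}$ consists of \emph{minimal} coset representatives, so the top cell is indexed by the permutation that is increasing within each position-block (with the value-blocks arranged in decreasing order), not decreasing within blocks as you wrote; this does not change the count of covered cells, but it does matter when reading off the positions entering Proposition \ref{prop:typeAcoef}.
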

\begin{proof} We will establish a criterion for orientability by seeking in which condition the top dimensional homology group is $\mathbb{Z}$. 
Let us begin with the (unique) Schubert top cell $\schub_{w_{\Theta}}$. The associated permutation $w_{\Theta}$ is the longest permutation (with respect to the Bruhat order) with descents at positions $d_1, \ldots, d_k$. In one-line notation, 
\begin{align*}
w_{\Theta} = (d_k+1) \cdots n | (d_{k-1}+1) \cdots d_k  | \cdots  | 1 \cdots d_1
\end{align*}

There are $k$ Schubert cells $\schub_{(w_{\Theta})'_j}$, $j\in [k]$, covered by $\schub_{w_{\Theta}}$. For each $j\in [k]$, the corresponding permutations of $w_{\Theta}$ and $(w_{\Theta})'_j$ differ only by the values at positions $d_{j-1}+1$ and $d_{j+1}$, where we consider $d_0=0$ and $d_{k+1}=n$, i.e., 
\begin{align*}
w_{\Theta} = (w_{\Theta})'_j \cdot (d_{j-1}+1 , d_{j+1}).
\end{align*}

By Proposition \ref{prop:typeAcoef}, it follows that 
\begin{align*}
\kappa(w_{\Theta},(w_{\Theta})'_j)= d_{j+1} - (d_{j-1}+1),
\end{align*}
i.e., $c(w_{\Theta},(w_{\Theta})'_j)=0$ if and only if $(d_{j+1} \!-\! d_{j-1})$ is even. Since $d_{j+1} \!-\! d_{j-1} = (d_{j+1} \!-\! d_{j})\!+\!(d_{j} \!-\! d_{j-1})$, $\partial \schub_{w_{\Theta}}\!=\!0$ if and only if $d_{j+1}\!-\!d_{j}$ have the same parity, for every $j$. \qedhere
\end{proof}

\subsection{Low-dimensional integral homology}
We now seek a formula for the 1,2-homology for partial flag manifolds of type A. This require to introduce a few combinatorial notations.

The code (also called Lehmer code) of a permutation $w\in S_{n}$ is an integer sequence $\alpha$ with $\alpha_{i} = \#\{k>i \ |\ w_{k}<w_{i}\}$ and it will be denoted by $\code(w)$. In other word, each entry of the code corresponds to the number of inversions to the right of $w_{i}$. It's clear that $0\leqslant \alpha_{i}\leqslant n-i$. The code provides a bijection between $S_{n}$ and the set $[0,n-1]\times [0,n-2] \times \cdots \times [0,1]$.

Given $w\in S_{n}$, the \emph{code spectrum} of $w$ is the unique partition $0<b_{1}\leqslant b_{2}\leqslant \cdots \leqslant b_{l}<n$ such that the code $\alpha$ of $w$ is given by $\alpha_{i} = \#\{j \colon b_{j} = i\}$.  We will denote by $\w{b_{1},\cdots, b_{\ell}}$ the permutation $w$ given by such code spectrum to distinguish it from the other notations.
For instance, $w\in S_{5}$ such that $\code(w) = (0,2,0,1)$ then its code spectrum is $(2,2,4)$, i.e, $w = \w{2,2,4}$.

This notation allows us to easily describe permutations with some choose length. Let us describe all permutations $w\in S_{n}$ with length up to three:
\begin{itemize}
\item $s_{i} = \w{i}$ for $i\in[n-1]$;
\item $s_{i}s_{j}= \w{i,j}$ for $i<j$ and $i,j\in [n-1]$;
\item $s_{i+1}s_{i} = \w{i,i}$ for $i\in [n-2]$;
\item $s_{i}s_{j} s_{k} =\w{i,j,k}$ for $i < j < k$ and $i,j,k\in [n-1]$;
\item $s_{i+1}s_{i} s_{k} = \w{i,i,k}$ for $i<k$ and $i,k\in [n-1]$;
\item $s_{i}s_{j+1} s_{j} =\w{i,j,j}$ for $i < j$ and $i,j\in [n-2]$;
\item $s_{i+2}s_{i+1} s_{i} = \w{i,i,i}$ for $i \in [n-3]$.
\end{itemize}

When we require to fix a reduced decomposition, we will choose the ones as above.

The following lemma provides us all the boundary maps required to compute 1,2-homology of any partial flag manifold of type A.

\begin{lem}\label{lem:boundarylow}\
\begin{enumerate}
\item $\partial \schub_{\w{i}} = 0$, for $i\in [n-2]$;
\item $\partial \schub_{\w{i,i}} = -2\schub_{\w{i}}$, for $i\in [n-2]$;
\item $\partial \schub_{\w{i,i+1}} = -2\schub_{\w{i+1}} $, for $i\in [n-2]$;
\item $\partial \schub_{\w{i,j}} = 0$, for $i\in [n-3]$ and $j\in [i+2,n-1]$.
\item $\partial \schub_{\w{i,j-1,j}} = 2 \schub_{\w{i,j}}$, for $i\in [n-3]$ and $j\in [i+2,n-1]$;
\item $\partial \schub_{\w{i,i+1,i+1}} = 2 \schub_{\w{i,i+1}} - 2\schub_{\w{i+1,i+1}}$ for $ i\in [n-3]$;
\end{enumerate}
\end{lem}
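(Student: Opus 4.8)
The plan is to compute each boundary map by combining three ingredients: the characterization of covering pairs in $S_n$ (Lemma \ref{lem:bjorner}), the height formula for $\kappa$ in type $A$ (Proposition \ref{prop:typeAcoef}), and the sign analysis of Theorem \ref{thm:rabelosanmartin}. For each permutation $w$ appearing on the left-hand side, I would first list, via the code-spectrum notation, every $w'$ covered by $w$; because $\ell(w)\le 3$ this is a short finite check. Then $\kappa(w,w')=j-i$ where $w=w'\cdot(i,j)$, so the parity of $j-i$ immediately tells us whether $c(w,w')$ is $0$ or $\pm 2$. What remains in each nonzero case is to pin down the sign, i.e.\ to compute $(-1)^{\I}\cdot\deg(\Phi_{w'}^{-1}\circ\Phi_{\widehat w_{\I}})$ for the fixed reduced decompositions chosen just before the lemma.

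For item (1), $w=\w{i}=s_i$ has length one, its only cover is the identity, and $\kappa=i-0$... more simply, the reduced word $s_i$ deletes to the empty word, $\kappa(w,w')=\height(a_i)=1$ is odd, so $c=0$; hence $\partial\schub_{\w{i}}=0$. For items (2) and (3), $w$ has length two with the two fixed reduced decompositions $s_{i+1}s_i$ (for $\w{i,i}$) and $s_is_{i+1}$ (for $\w{i,i+1}$); each has exactly one cover of length one, obtained by deleting one letter. Deleting a letter from a length-two word always yields an (automatically reduced) length-one word, so here $\widehat w_{\I}=w'$ and $\deg=1$ by the Remark after Theorem \ref{thm:rabelosanmartin}. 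One then reads off $w=w'\cdot(i,j)$ with $j-i=1$, giving $\kappa=1$... but that would make $c=0$, which is wrong, so the point is that for these particular covers $j-i=1$ is \emph{not} the relevant transposition — one must recompute: for $\w{i,i}=s_{i+1}s_i$, the unique covered element is $s_i=\w{i}$ via deleting the first letter, and $\w{i,i}$ in one-line notation differs from $\w{i}$ by the transposition $(i,i+2)$, so $\kappa=2$ and $c=\pm2$; with $\I=1$ and $\deg=1$ we get the coefficient $-2$ as stated (the overall sign being fixed by the $(-1)^{\I}$ convention, which I would verify directly against \cite{RSm19}). The analysis for (3) is identical with the roles arranged so that the surviving generator is $\schub_{\w{i+1}}$.

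For items (4), (5) and (6), $w$ has length three. Item (4): $\w{i,j}=s_is_j$ with $j\ge i+2$ is already a product of two commuting simple reflections away from... wait, $\w{i,j}$ with $i<j$ and $j\ge i+2$ has length two, not three; its covers are $\w{i}$ and $\w{j}$, each reached by deleting one letter of $s_is_j$, hence $\widehat w_{\I}=w'$ and $\deg=1$; the two transpositions are $(i,i+1)$-type with difference ... I would compute the one-line notations explicitly and find $j-i$ odd in the relevant sense, forcing $c=0$, so $\partial\schub_{\w{i,j}}=0$. Items (5) and (6) are the genuinely three-letter cases: $\w{i,j-1,j}=s_is_js_{j-1}$... here I would (a) enumerate the covers using Lemma \ref{lem:bjorner}, checking that only the listed $w'$ survive the no-intermediate-value condition, (b) apply Proposition \ref{prop:typeAcoef} to get $\kappa$ even for exactly those covers, and (c) determine each sign via Theorem \ref{thm:rabelosanmartin}: when the deletion-reduced word $\widehat w_{\I}$ differs from the chosen reduced word for $w'$, I must compute $\deg(\Phi_{w'}^{-1}\circ\Phi_{\widehat w_{\I}})$, which by the cited results of \cite{RSm19} reduces to a computation of degree of an explicit map between spheres obtained from the characteristic maps.

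\textbf{Main obstacle.} The parity/vanishing part is routine once Proposition \ref{prop:typeAcoef} is in hand; the real work is the sign determination in items (2), (3), (5), (6). Specifically, for the length-three cases where the canonical reduced decomposition of $w'$ does not coincide with the word $\widehat w_{\I}$ obtained by deleting a letter from $w$, I need to evaluate $\deg(\Phi_{w'}^{-1}\circ\Phi_{\widehat w_{\I}})\in\{\pm1\}$. I expect this to be the crux: it requires tracking the braid/commutation moves relating the two reduced words and computing the induced orientation change on the $\ell(w')$-ball, using the explicit characteristic maps of \cite{RSm19}. I would handle it by reducing each instance to a single elementary move (a commutation $s_is_j=s_js_i$ with $|i-j|\ge2$, or a braid $s_is_{i+1}s_i=s_{i+1}s_is_{i+1}$) and citing the degree computation for that move; consistency of the resulting signs with the stated $-2$ and $+2$ coefficients then serves as a check. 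Finally I would remark that, since all these Schubert varieties lie in the $A_3$ (or smaller) sub-root-system, one may alternatively verify the whole lemma by a finite direct computation in $S_4$.
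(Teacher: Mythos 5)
Your three ingredients (Lemma \ref{lem:bjorner}, Proposition \ref{prop:typeAcoef}, Theorem \ref{thm:rabelosanmartin}) are the right ones, but the plan leaves its self-identified crux unresolved: the signs in items (2), (3), (5), (6). You anticipate that in the length-three cases the deletion word $\widehat{w}_{\I}$ may differ from the chosen reduced word of $w'$, and you defer the sign to an unproved degree computation for braid/commutation moves ``citing the degree computation for that move'' --- but no such computation is carried out or available in the form you invoke (indeed the paper's final section flags the general degree computation as nontrivial future work). The point you miss is that the reduced decompositions fixed just before the lemma are chosen precisely so that this situation never occurs: every one-letter deletion of every $w$ in the lemma is \emph{literally} the fixed reduced word of the covered element. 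For example, deleting a letter from $\w{i,j-1,j}=s_is_{j-1}s_j$ yields $s_{j-1}s_j=\w{j-1,j}$, $s_is_j=\w{i,j}$, $s_is_{j-1}=\w{i,j-1}$, and deleting from $\w{i,i+1,i+1}=s_is_{i+2}s_{i+1}$ yields $s_{i+2}s_{i+1}=\w{i+1,i+1}$, $s_is_{i+1}=\w{i,i+1}$, $s_is_{i+2}=\w{i,i+2}$. Hence $\Phi_{w'}=\Phi_{\widehat{w}_{\I}}$, the degree is $1$ by the Remark following Theorem \ref{thm:rabelosanmartin}, and the sign is simply $(-1)^{\I}$; computing $\gamma=u^{-1}(\raiz_{\I})$ and its height for each deletion then gives all six items. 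Without this observation (or an actual degree computation), your argument does not determine the signs, which is exactly the content of the lemma beyond Proposition \ref{prop:typeAcoef}.

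There are also smaller inaccuracies that a complete write-up must fix. In item (2) the covered element is not unique: $\w{i,i}=s_{i+1}s_i$ covers both $\w{i}$ and $\w{i+1}$, and one must check that the second coefficient vanishes (here $\gamma=a_i$ has height $1$), as the paper does; your momentary detour through ``$j-i=1$, giving $\kappa=1$'' reflects conflating the two covers. Item (4) is only gestured at: for $s_is_j$ with $j\geqslant i+2$ both deletions give $\gamma$ simple, so $\kappa=1$ and both coefficients vanish --- this is a one-line check, not something to ``find in the relevant sense.'' Finally, the fallback of ``a finite direct computation in $S_4$'' does not by itself settle the signs, since $\deg\left(\Phi_{w'}^{-1}\circ \Phi_{\widehat{w}_\I}\right)$ is a topological datum of the characteristic maps, not a Weyl-group quantity; it only becomes trivial once the coincidence of reduced words above is noted.
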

\begin{proof}
Proposition \ref{prop:typeAcoef} gives us when the coefficient is $\pm 2$.
Recall that $\hat{w}_{\I}$ is the reduced decomposition of $w'$ obtained by removing the $I$-th simple reflection of $w$.
To get the sign we can observe that if we choose to fix the reduced decomposition for $w$ and $w'$ as given above, both reduced decompositions $w'$ and $\hat{w}_{\I}$ are exactly the same. Then, $\Phi_{w'} = \Phi_{\hat{w}_{\I}}$ and $\deg\left(\Phi_{w'}^{-1}\circ \Phi_{\widehat{w}_\I} \right) = 1$. Then, the sign of the coefficient is given by $(-1)^{\I}$.
Therefore,
\begin{enumerate}
\item For $i\in [n-1]$, $c(\w{i},e) = 0$; %Then, $\partial \schub_{\w{i}} = 0$;
\item For $i\in [n-2]$, $c(\w{i,i},\w{i}) = -2$ and $c(\w{i,i},\w{i+1}) = 0$;
\item For $i\in [n-2]$, $c(\w{i,i+1},\w{i+1}) = -2$ and $c(\w{i,i+1},\w{i}) = 0$;
\item For $i\in [n-3]$ and $j\in [i+2,n-1]$, $c(\w{i,j},\w{i}) = 0$ and $c(\w{i,j},\w{j}) = 0$;
\item For $i\in [n-3]$ and $j\in [i+2,n-1]$, $c(\w{i,j-1,j}, \w{j-1,j})=0$, $c(\w{i,j-1,j}, \w{i,j})=2$, and $c(\w{i,j-1,j}, \w{i,j-1})=0$;
\item For $i\in [n-3]$, $c(\w{i,i+1,i+1}, \w{i+1,i+1})=-2$, $c(\w{i,i+1,i+1}, \w{i,i+1})=2$, and $c(\w{i,i+1,i+1}, \w{i,i+2})=0$.
\end{enumerate}
\end{proof}

Denote by $r_{\Theta}$ the number of connected components of the Dynkin diagram of $\Theta$.

\begin{thm} Consider a partial flag manifold $\mathbb{F}_{\Theta}$ of type $A$, where $\Theta\subset\Sigma$.
\begin{enumerate}
\item For $n\geqslant 3$, the 1-homology is given by
\begin{equation*}
H_{1}(\mathbb{F}_{\Theta},\Z) \cong (\Z_{2})^{n-|\Theta|-1}.
\end{equation*}

\item For $n\geqslant 4$, the 2-homology of is given by
\begin{equation*}
H_{2}(\mathbb{F}_{\Theta},\Z) \cong (\Z_{2})^{N_{\Theta}}
\end{equation*}
where $N_{\Theta} = {n-|\Theta|-1 \choose 2} +r_{\Theta} - 1.$
\end{enumerate}
\end{thm}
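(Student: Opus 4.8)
The plan is to compute $H_1$ and $H_2$ directly from the cellular chain complex $\mathcal{C}$ of $\mathbb{F}_\Theta$, using the fact that the $k$-cells are indexed by the minimal coset representatives $\mathcal{W}^\Theta$ of length $k$, together with the explicit boundary maps recorded in Lemma \ref{lem:boundarylow}. Since $\partial$ takes values in $2\mathcal{C}$ on covering pairs, the complex $\mathcal{C}\otimes\mathbb{Q}$ has zero differential, so the homology is pure $2$-torsion; the task reduces to a rank count over $\mathbb{Z}$ (or $\mathbb{Z}_2$) of the relevant pieces of $\partial$. Throughout I will use that $w\in\mathcal{W}^\Theta$ iff $w$ has no descent in $\Theta$, i.e. in code-spectrum terms $\w{b_1,\dots,b_\ell}\in\mathcal{W}^\Theta$ precisely when none of the $b_i$ is the index of a simple root in $\Theta$ — equivalently, writing the complement of $\Theta$ as $\{a_{d_1},\dots,a_{d_k}\}$ with $k=n-1-|\Theta|$, every $b_i$ must lie in $\{d_1,\dots,d_k\}$. (I should double-check the exact membership criterion against how $\w{\cdot}$ was defined, but the content is that the admissible cells use only the ``gap positions''.)

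For part (1): the $1$-cells in $\mathcal{W}^\Theta$ are the $\schub_{\w{d_t}}$, $t\in[k]$, so $C_1$ has rank $k=n-|\Theta|-1$; the only $0$-cell is the point $\schub_e$, and by Lemma \ref{lem:boundarylow}(1) each $\partial\schub_{\w{i}}=0$, hence $\partial\colon C_1\to C_0$ is zero and $\ker\partial_1=C_1$. So $H_1=C_1/\operatorname{im}\partial_2$, and I must determine $\operatorname{im}\partial_2$ inside $\mathbb{Z}^k$. The relevant $2$-cells are $\schub_{\w{i,i}}$ and $\schub_{\w{i,j}}$ with the indices restricted to the gap set; by Lemma \ref{lem:boundarylow}(2)--(4), $\partial\schub_{\w{d_t,d_t}}=-2\schub_{\w{d_t}}$, $\partial\schub_{\w{d_t,d_{t+1}}}=-2\schub_{\w{d_{t+1}}}$ when $d_{t+1}=d_t+1$ (so that case only contributes another multiple of a single basis vector), and $\partial\schub_{\w{d_s,d_t}}=0$ when $d_t\ge d_s+2$. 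Hence $\operatorname{im}\partial_2 = 2\mathbb{Z}^k$ (the diagonal cells $\w{d_t,d_t}$ already give $2\schub_{\w{d_t}}$ for every $t$), giving $H_1\cong(\mathbb{Z}_2)^k=(\mathbb{Z}_2)^{n-|\Theta|-1}$. For this I need $n\ge 3$ so that there is at least one length-$2$ cell of the required diagonal type, i.e. some $d_t\le n-2$.

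For part (2): $H_2=\ker\partial_2/\operatorname{im}\partial_3$. First compute $\ker\partial_2$. Work over $\mathbb{Z}_2$ — since everything is $2$-torsion this loses nothing for the final count, and it turns all the $\pm2$'s into $0$; but that makes $\partial_2$ identically zero, which is too lossy. Instead I keep track over $\mathbb{Z}$: $\partial_2$ on the admissible $2$-cells has image $2\mathbb{Z}^k$ as above, and its kernel is the sublattice spanned by the cells $\schub_{\w{d_s,d_t}}$ with $t\ge s+2$ gap-consecutive-or-not (those map to $0$), together with the \emph{differences} of cells having the same nonzero boundary. Concretely, among $\{\schub_{\w{d_t,d_t}}\}_{t\in[k]}$ and the ``adjacent'' cells $\schub_{\w{d_t,d_{t+1}}}$ with $d_{t+1}=d_t+1$, the boundary is a surjection onto $2\mathbb{Z}^k$ from a set of generators of size $k+(\#\text{adjacent gaps})$; its kernel is free of rank equal to the number of adjacent-gap cells, generated by $\schub_{\w{d_t,d_{t+1}}}-\schub_{\w{d_{t+1},d_{t+1}}}$ in those cases. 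Counting: the total number of admissible $2$-cells is $\binom{k}{2}$ (unordered pairs $\w{d_s,d_t}$, $s<t$) plus $k$ (diagonal $\w{d_t,d_t}$), and $\partial_2$ has rank $k$, so $\operatorname{rank}\ker\partial_2=\binom{k}{2}$. Then I must show $\operatorname{im}\partial_3$ has ``rank'' (as a saturated-up-to-$2$-torsion count, i.e. $\dim_{\mathbb{Z}_2}$ of $\operatorname{im}\partial_3\otimes\mathbb{Z}_2$ inside $\ker\partial_2\otimes\mathbb{Z}_2$) equal to $\binom{k}{2}-N_\Theta = \binom{k}{2} - \binom{k}{2} - r_\Theta + 1 = 1-r_\Theta$, i.e. that the image has corank $r_\Theta-1$; equivalently $\dim_{\mathbb{Z}_2}H_2 = \binom{k}{2}+r_\Theta-1$.

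The main obstacle — and where I expect to spend the real effort — is the $\partial_3$ computation and the appearance of $r_\Theta$. The admissible $3$-cells are $\w{d_r,d_s,d_t}$, $\w{d_r,d_s,d_s}$, $\w{d_r,d_s,d_{s+1}}$ with $d_{s+1}=d_s+1$, etc., always with indices in the gap set, and Lemma \ref{lem:boundarylow}(5)--(6) gives their boundaries. The cells of type (5), $\schub_{\w{d_r,d_s,d_t}}$ with consecutive-in-spectrum middle, contribute $2\schub_{\w{d_r,d_t}}$-type classes that kill the ``interior'' part of $\ker\partial_2\otimes\mathbb{Z}_2$, while the type-(6) cells $\schub_{\w{d_t,d_{t+1},d_{t+1}}}$ (which require $a_{d_t},a_{d_{t+1}}$ \emph{both} in $\Sigma\setminus\Theta$ and adjacent, i.e. $d_{t+1}=d_t+1$ — this is exactly a place where the Dynkin diagram of $\Theta$ fails to connect across) have boundary $2\schub_{\w{d_t,d_{t+1}}}-2\schub_{\w{d_{t+1},d_{t+1}}}$, which is precisely a generator of the rank-(\#adjacent gaps) part of $\ker\partial_2$. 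The bookkeeping amounts to: the number of ``adjacent gaps'' $d_{t+1}=d_t+1$ equals $k - r_\Theta + (\text{correction for the two ends})$ — more precisely, if $\Sigma\setminus\Theta$ has $k$ elements and $\Theta$ has $r_\Theta$ connected components, an Euler-characteristic/counting argument on the linear Dynkin diagram of $A_{n-1}$ relates $k$, $r_\Theta$, and the number of adjacencies among the removed nodes. Carefully matching which type-(5)/(6) boundaries are linearly independent mod $2$ inside $\ker\partial_2\otimes\mathbb{Z}_2$, and checking that the corank is exactly $r_\Theta-1$ (the ``$-1$'' being the single global relation $\sum$ of boundaries, or equivalently connectedness of $\mathbb{F}_\Theta$ in the relevant range), is the crux. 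I would organize it as: (i) fix bases of $C_2^\Theta$ and $C_3^\Theta$ indexed by gap-tuples; (ii) write the matrix of $\partial_3\otimes\mathbb{Z}_2$ restricted to $\ker(\partial_2\otimes\mathbb{Z}_2)$; (iii) show its rank is $\binom{k}{2}-r_\Theta+1$ by exhibiting that many independent columns (the type-(5) cells give $\binom{k-1}{2}$-ish independent ones and the type-(6) cells supply the rest, up to one relation per connected component of $\Theta$); (iv) conclude. Requiring $n\ge 4$ ensures length-$3$ cells exist and the counts $\binom{k}{2}$, $r_\Theta-1$ are non-negative and the formula is not degenerate.
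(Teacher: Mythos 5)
Your plan follows the same general skeleton as the paper (cellular chain complex, the explicit boundaries of Lemma \ref{lem:boundarylow}, a rank count), but it rests on an incorrect description of $\weyl^{\Theta}$, and this breaks both parts. Membership in $\weyl^{\Theta}$ is governed by the (right) descent set, and the descent set of $\w{b_{1},\dots,b_{\ell}}$ is \emph{not} $\{b_{1},\dots,b_{\ell}\}$: it is the set of $i$ whose multiplicity among the $b$'s strictly exceeds that of $i+1$. Thus $\w{i,i+1}$, $\w{i+1,i+1}$ and $\w{i,i+1,i+1}$ all have descent set $\{i+1\}$ and lie in $\weyl^{\Theta}$ as soon as $a_{i+1}\notin\Theta$, even when $a_{i}\in\Theta$; these are exactly the cells your ``all letters in the gap set'' rule discards. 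Already for $H_{1}$ this is fatal in the boundary case $d_{k}=n-1$: take $n=3$, $\Theta=\{a_{1}\}$, so $\mathbb{F}_{\Theta}\cong \R P^{2}$. Your admissible $2$-cells would be empty (the diagonal $\w{2,2}$ does not exist in $S_{3}$ and $\w{1,2}$ is excluded by your rule), so you would conclude $H_{1}\cong\Z$, whereas the cell $\w{1,2}$, admissible because its only descent is $2$, gives $\partial\schub_{\w{1,2}}=-2\schub_{\w{2}}$ and $H_{1}\cong\Z_{2}$. The paper treats precisely this exceptional case ($d_{k}=n-1$) via $\w{d_{k}-1,d_{k}}$; your statement that the diagonal cells hit ``every'' generator, and the remark that $n\geqslant 3$ guarantees ``at least one'' diagonal cell, do not cover it.

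For part (2) the same miscount makes the plan structurally impossible rather than merely incomplete. With the correct cells, the kernel of $\partial_{2}$ in the $\Theta$-complex has rank $N_{\Theta}$ itself, not $\binom{k}{2}$: the extra kernel elements $\schub_{\w{i,i+1}}-\schub_{\w{i+1,i+1}}$ exist whenever $a_{i+1}\notin\Theta$ (not only for adjacent gaps), and this is exactly where $r_{\Theta}$ enters. Moreover each kernel generator is half the boundary of an explicit admissible $3$-cell ($\w{i,j-1,j}$ or $\w{i,i+1,i+1}$), so $\mathrm{im}\,\partial_{3}=2\ker\partial_{2}$ and there is no residual ``corank $r_{\Theta}-1$'' to be produced by $\partial_{3}$; all of the work is in counting the kernel generators, which the paper does by an induction on $|\Theta|$. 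Your own target is a warning sign: you ask $\mathrm{im}\,\partial_{3}$ to have rank $\binom{k}{2}-N_{\Theta}=1-r_{\Theta}$, which is negative whenever $\Theta$ is disconnected, and $\dim_{\Z_{2}}H_{2}=\binom{k}{2}+r_{\Theta}-1$ cannot exceed the rank of a kernel you claim is $\binom{k}{2}$. Finally, the crux counting step (your item (iii)) is only sketched, so even after repairing the $\weyl^{\Theta}$ criterion and the $d=1,\,n-1$ boundary effects, the identity with $N_{\Theta}$ would still need an argument such as the paper's induction.
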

\begin{proof}
To compute the 1-homology, suppose that $\Sigma-\Theta=\{a_{d_{1}},a_{d_{2}},\dots, a_{d_{k}}\}$ where $0<d_1<\cdots<d_k<n$.
All permutations of length 1 in $\weyl^{\Theta}$ are in the form $\w{d_{i}}$. By Lemma \ref{lem:boundarylow}, the kernel $\ker(\partial_{\Theta})$ is generated by $\schub_{\w{d_{i}}}$, $i\in [k]$.

For every $i\in [k]$ such that $d_{i}<n-1$, we have that $\partial \schub_{\w{d_{i},d_{i}}} = -2\schub_{\w{d_{i}}}$. The only exception is when $d_{k}=n-1$. In this case, $\partial \schub_{\w{d_{k}-1,d_{k}}} = -2\schub_{\w{d_{k}}}$. We conclude that  $H_{1}(\mathbb{F}_{\Theta},\Z)$ has no free part and the set $\{\w{d_{i}} \colon i\in [k]\}$ generates the torsion, i.e, $H_{1}(\mathbb{F}_{\Theta},\Z) \cong (\Z_{2})^k$.

To compute the 2-homology, let us prove that it has no free part, i.e., $H_{2}(\mathbb{F}_{\Theta},\Z) \cong (\Z_{2})^{x}$ for some integer $x$. Consider the maximal flag manifold $\mathbb{F}$.

By Lemma \ref{lem:boundarylow}, the kernel $\ker(\partial)$ is generated by 
\begin{itemize}
\item $X_{i,j} = \schub_{\w{i,j}}$, for $i\in [n-3]$ and $j\in [i+2,n-1]$;
\item $X_{i,i+1} = \schub_{\w{i,i+1}} - \schub_{\w{i+1,i+1}}$, for $i\in [n-3]$.
\end{itemize}

Notice that we do not allow $X_{n-2,n-2}$ since $\w{n-1,n-1}\not\in S_{n}$. Also by Lemma \ref{lem:boundarylow}, we have that each $X_{i,j}$ is image through $\partial$ of the following Schubert cells:
\begin{alignat*}{2}
\partial \schub_{\w{i,j-1,j}} &= 2 X_{i,j} && \mbox{, for } i\in [n-3] \mbox{ and } j\in [i+2,n-1],\\
\partial \schub_{\w{i,i+1,i+1}}  & = 2X_{i,i+1}  && \mbox{, for } i\in [n-3].
\end{alignat*}

These generators $X_{i,j}$ are indexed by the set $\mathcal{I}_{2}$ of pairs $(i,j)$ given by
\begin{equation*}
\mathcal{I}_{2} = \{(i,j) \colon i\in [n-3] \mbox { and } j\in [i+1,n-1]\} 
= \{(i,j)\in [n-1]^{2}\colon i<j\}-\{(n-2,n-1)\}.
\end{equation*}

Hence, $H_{2}(\mathbb{F},\Z) \cong (\Z_{2})^{|\mathcal{I}_{2}|} = (\Z_{2})^{{n-1\choose 2}-1}$ since $\mathcal{I}_{2}$ counts the number of 2-combinations in $n-1$ elements but one.

For the partial flag manifold $\mathbb{F}_{\Theta}$, consider the following cases:
\begin{itemize}
\item if $\w{i,j}\in \weyl^{\Theta}$ such that $i\in [n-3]$ and $j\in [i+2,n-1]$ then $\w{i,j-1,j} \in \weyl^{\Theta}$. Hence, $X_{i,j}$ is a generator of the torsion of $H_{2}(\mathbb{F}_{\Theta},\Z)$;
\item if $\w{i,i+1}\in \weyl^{\Theta}$ for $i\in [n-3]$ then $\w{i+1,i+1}\in \weyl^{\Theta}$ and $\w{i,i+1,i+1} \in \weyl^{\Theta}$. Hence, $X_{i,i+1}$ is a generator of the torsion of $H_{2}(\mathbb{F}_{\Theta},\Z)$.
\end{itemize}

Therefore, the 2-homology has no free part and the torsion is generated by all $X_{i,j}$ such that $\w{i,j} \in \weyl^{\Theta}$, i.e., $H_{2}(\mathbb{F}_{\Theta},\Z) \cong (\Z_{2})^{|\{ (i,j)\in\mathcal{I}_{2} \colon \w{i,j} \in \weyl^{\Theta}\}|}$.

Finally, we will prove that $|\{ (i,j)\in\mathcal{I}_{2} \colon \w{i,j} \in \weyl^{\Theta}\}| = N_{\Theta}$ by induction on the cardinality of $\Theta$. If $|\Theta| = 0$ then $|\mathcal{I}_{2}| = {n-1 \choose 2} -1 = N_{\emptyset}$. Suppose, by induction, that $|\{ (i,j)\in\mathcal{I}_{2} \colon \w{i,j} \in \weyl^{\Delta}\}| = N_{\Delta}$ for any strict subset $\Delta$ of $ \Theta$.

Assume that $\Delta = \Theta - \{a_{k}\}$, for $k\in [n-1]$, where $a_{k}$ is the greatest simple root (with respect to $\Sigma$) in $\Theta$. Since $\weyl^{\Theta}\subset\weyl^{\Delta}$, by induction
\begin{equation}\label{eq:XijTheta}
|\{ (i,j)\in\mathcal{I}_{2} \colon \w{i,j} \in \weyl^{\Theta}\}| = N_{\Delta} -|\{ (i,j)\in\mathcal{I}_{2} \colon \w{i,j} \in \weyl^{\Delta}\backslash \weyl^{\Theta}\}|
\end{equation}

Let us compute $|\{ (i,j)\in\mathcal{I}_{2} \colon \w{i,j} \in \weyl^{\Delta}\backslash \weyl^{\Theta}\}|$.
Given $(i,j)\in \mathcal{I}_{2}$ such that $\w{i,j} \in \weyl^{\Delta}$, consider the following cases:
\begin{itemize}
\item if $i\neq k$ and $j\neq k$ then $\w{i,j}\in \weyl^{\Theta}$;
\item if $i = k$ and $j = k+1$ then $\w{i,j}\in \weyl^{\Theta}$;
\item if $i = k$ and $j > k+1$ then $\w{i,j}\not\in \weyl^{\Theta}$;
\item if $j = k$ then $\w{i,j}\not\in \weyl^{\Theta}$.
\end{itemize}

Thus,
\begin{align*}
|\{ (i,j)\in\mathcal{I}_{2} \colon \w{i,j} \in \weyl^{\Delta}\backslash \weyl^{\Theta}\}| =
& |\{ (i,j) \in\mathcal{I}_{2} \colon i = k, j>k+1, \mbox{ and } \w{i,j} \in \weyl^{\Delta}\}|+\\
&+|\{ (i,j)\in\mathcal{I}_{2} \colon j = k \mbox{ and } \w{i,j} \in \weyl^{\Delta} \}|.
\end{align*}

Since $a_{k}$ is the greatest root of $\Theta$ then 
\begin{align*}
|\{ (i,j) \in\mathcal{I}_{2} \colon i = k, j>k+1, \mbox{ and } \w{i,j} \in \weyl^{\Delta}\}| & = |\{j\colon j\in [k+2,n-1]\}|\\
& =\left\{
\begin{array}{cc}
n-k-2 & \mbox{ if } k < n-1, \\ 
0 & \mbox{ if } k = n-1.
\end{array} 
\right.
\end{align*}

On the other hand,
\begin{align*}
|\{ (i,j)\in\mathcal{I}_{2} & \colon j =  k \mbox{ and } \w{i,j} \in \weyl^{\Delta} \}| =  \\
& = |\{i \colon i\in [k-1]\cap [n-3] \mbox{ and } a_{i}\not\in\Delta\}| +
\left\{
\begin{array}{cc}
1 & \mbox{ if } a_{k-1}\in \Delta, \\ 
0 & \mbox{ otherwise.}
\end{array} 
\right. \\
& = |\{i \colon i\in [k-1]\cap [n-3] \mbox{ and } a_{i}\not\in\Delta\}| + (1-(r_{\Theta} - r_{\Delta}))  \\
& = 1-(r_{\Theta} - r_{\Delta}) + 
\left\{
\begin{array}{cc}
k - 1 - |\Delta| & \mbox{ if } k<n-1, \\ 
n-3 -|\Delta| & \mbox{ if } k=n-1.
\end{array} 
\right.\\
&= k -|\Delta|-r_{\Theta}+r_{\Delta} +
\left\{
\begin{array}{cc}
0 & \mbox{ if } k<n-1, \\ 
-1 & \mbox{ if } k=n-1.
\end{array} 
\right.
\end{align*}
 
Hence, $|\{ (i,j)\in\mathcal{I}_{2} \colon \w{i,j} \in \weyl^{\Delta}\backslash \weyl^{\Theta}\}| = n-|\Delta|-2-r_{\Theta}+r_{\Delta}$ 
and, by Equation \eqref{eq:XijTheta},
\begin{align*}
|\{ (i,j)&\in \mathcal{I}_{2} \colon \w{i,j} \in \weyl^{\Theta}\}|   = {n-|\Delta|-1 \choose 2} +r_{\Delta} - 1 - (n-|\Delta|-2-r_{\Theta}+r_{\Delta})\\
%& = \dfrac{(n-|\Delta|-1)(n-|\Delta|-2)}{2} - (n-|\Delta|-2) +r_{\Theta}-1\\
& = \dfrac{(n-|\Delta|-2)(n-|\Delta|-3)}{2} +r_{\Theta}-1  = {n-(|\Delta|+1)-1 \choose 2} +r_{\Theta} - 1 = N_{\Theta}.\qedhere
\end{align*}
\end{proof}

\section{Final comments and further directions}\label{sec:conclusion}
We would like to highlight that, although this is a classical theme -- topology of real flag manifolds such as Projective spaces and Grassmannian manifolds -- we have not found in the literature such simple formula to compute its homology groups. Its remarkable how simple are the formulas for 1,2-homology of partial flag manifolds of type A.

With the results obtained in this paper, we are able to visualize other directions to research as listed below. 
\begin{itemize}
\item For type A flag manifolds, it seems possible to get a formula for 3, 4-homology. It will require to get a better understanding of the coefficient since the degree in Theorem \ref{thm:rabelosanmartin} are not as easy to compute. A forthcoming paper will deal with the combinatorics involved to explicitly compute the sign in the type A case.
\item Theorem \ref{thm:dualheight} provides a formula of the boundary coefficient for split real forms. It is reasonable to ask about low dimensional homology of other types of flag manifolds. This would require to get a nicer combinatorial model for the Weyl group.
\end{itemize}

\subsubsection*{Acknowledgments}
We thank to San Martin and Lucas Seco for helpful suggestions and valuable discussions.
This research was motivate by computer investigation using the open-source mathematical software Sage \cite{Sage}.

\bibliographystyle{plain}
\bibliography{biblio}

\begin{thebibliography}{10}

\bibitem{BB05}
A.~Björner and F.~Brenti.
\newblock {\em Combinatorics of Coxeter groups}.
\newblock Springer-Verlag, 2005.

\bibitem{CS99}
L.~Casian and R.~J. Stanton.
\newblock Schubert cells and representation theory.
\newblock {\em Invent. Math.}, 137(3):461--539, 1999.

\bibitem{BS19}
V.~del Barco and L.A.B.~San Martin.
\newblock {De Rham 2-Cohomology of Real Flag Manifolds}.
\newblock {\em SIGMA}, 15:51, 2019.

\bibitem{GS20}
L.~Grama and L.~Seco.
\newblock {Second Homotopy Group and Invariant Geometry of Flag Manifolds}.
\newblock {\em Results Math}, 75:94, 2020.

\bibitem{Koc95}
R.~R. Kocherlakota.
\newblock Integral homology of real flag manifolds and loop spaces of symmetric
  spaces.
\newblock {\em Adv. Math.}, 110:1--46, 1995.

\bibitem{LR19}
J.~Lambert and L.~Rabelo.
\newblock Covering relations of k-grassmannian permutations of type b.
\newblock {\em Australas. J. Combin}, 75(1):73--95, 2019.

\bibitem{LR19a}
J.~Lambert and L.~Rabelo.
\newblock Integral homology of real isotropic and odd orthogonal grassmannians.
\newblock {\em ArXiv e-prints}, 2019.
\newblock \texttt{arXiv:1604.02177v2}.

\bibitem{Mat19}
A.~K. Matszangosz.
\newblock On the cohomology ring of real flag manifods: Schubert cycles.
\newblock {\em ArXiv e-prints}, 2019.
\newblock \texttt{arXiv:1910.11149}.

\bibitem{PSS12}
M.~{Patr\~ao}, L.~A.~B. {San Martin}, L.~J. dos Santos, and L.~Seco.
\newblock {Orientability of vector bundles over Real flag manifolds}.
\newblock {\em Topol. Its Appl.}, 159 (10-11):2774--2786, 2012.

\bibitem{Per15}
N.~Perrin.
\newblock Introduction to lie algebras.
\newblock 2015.
\newblock https://lmv.math.cnrs.fr/wp-content/uploads/2019/09/lie-alg-2.pdf.

\bibitem{Rab16}
L.~Rabelo.
\newblock {Cellular Homology of Real Maximal Isotropic Grassmannians}.
\newblock {\em Adv. in Geometry}, 16(3):361--380, 2016.

\bibitem{RSm19}
L.~Rabelo and L.~A. B.~San Martin.
\newblock Cellular homology of real flag manifolds.
\newblock {\em Indag. Math}, 30(5):745--772, 2019.

\bibitem{Sage}
SageMath.
\newblock {the Sage Mathematics Software System (Version 9.1)}.
\newblock {\em The Sage Developers}, 2020.
\newblock https://www.sagemath.org.

\bibitem{Wig98}
M.~Wiggerman.
\newblock {The fundamental group of real flag manifolds}.
\newblock {\em Indag. Mathem. N.S.}, 1:141--153, 1998.

\end{thebibliography}

\end{document}